\documentclass[a4paper, leqno, 12pt]{article}
\parskip=5pt
\parindent=0pt

\makeindex

\usepackage{amsmath, amssymb, amsthm}

\usepackage[all]{xy}

    \newcommand{\cT}{{\mathcal{T}}}

    \newcommand{\cO}{ {\mathcal{O} } }
    
    \newcommand{\T}{{\mathcal{T}}}

    \renewcommand{\mod}{{\;\rm mod}}

    \newcommand{\be}{\begin{equation}}
    \newcommand{\ee}{\end{equation}}

\newtheorem{thm}{Theorem}[section]
\newtheorem{lemma}[thm]{Lemma}
\newtheorem{cor}[thm]{Corollary}
\newtheorem{prop}[thm]{Proposition}
\newtheorem{theorem}[thm]{Theorem}

\theoremstyle{definition}

\theoremstyle{remark}
\newtheorem{rem}[thm]{Remark}
\newtheorem{example}[thm]{Example}

\newcommand{\ZZ}{{\mathbb Z}}
\newcommand{\CC}{{\mathbb C}}
\newcommand{\FF}{{\mathbb F}}

\newcommand{\NN}{{\mathbb N}}

\newcommand{\PP}{{\mathbb P}}

\newcommand{\oplusm}{\mathop{\bigoplus}\limits}
\newcommand{\summ}{\mathop{\sum}\limits}

    \begin{document}
    \bibliographystyle{amsplain}

\title{
Quadratic Differentials and Equivariant Deformation Theory of
Curves}

\author{B.~K\"ock and A.~Kontogeorgis}

    \maketitle

    \begin{abstract}
Given a finite $p$-group $G$ acting on a smooth projective
curve $X$ over an algebraically closed field $k$ of
characteristic $p$, the dimension of the tangent space of the
associated equivariant deformation functor is equal to the
dimension of the space of coinvariants of $G$ acting on the
space $V$ of global holomorphic quadratic differentials on $X$.
We apply known results about the Galois module structure of
Riemann-Roch spaces to compute this dimension when $G$ is
cyclic or when the action of $G$ on $X$ is weakly ramified.
Moreover we determine certain subrepresentations of $V$, called
$p$-rank representations.
        \end{abstract}

    \section{Introduction}

Let $X$ be a non-singular complete curve of genus $g_X \ge 2$
defined over an algebraically closed field $k$ of
characteristic $p$ and let $G$ be a subgroup of the
automorphism group of $X$. The equivariant deformation problem
associated with this situation is to determine in how many ways
$X$ can be deformed to another curve that also allows $G$ as a
group of automorphisms. More precisely, we are led to study the
deformation functor $D$ which maps any local Artinian
$k$-algebra $A$ to the set of isomorphism classes of lifts of
$(G,X)$ to an action of $G$ on a smooth scheme $\tilde{X}$ over
$\textrm{Spec}(A)$, see \cite{Be-Me} for a detailed description
of $D$. The dimension of the tangent space of $D$ (in the sense
of Schlessinger \cite{Schlessinger}) can be thought of as a
(crude) answer to the above-mentioned equivariant deformation
problem. In \cite{Be-Me}, Bertin and M\'ezard have shown that
the tangent space of $D$ is isomorphic to the equivariant
cohomology $H^1(G, \cT_X)$ of $(G,X)$ with values in the
tangent sheaf $\cT_X$ of $X$. A more detailed analysis of the
above-mentioned equivariant deformation problem also involves
determining obstructions in $H^2(G, \cT_X)$ and studying the
structure of the versal deformation ring associated with $D$,
see \cite{Be-Me} and \cite{CK}.

In the classical case, i.e.\ when $k=\CC$ (see
e.g.~\cite[Section~V.2.2]{Farkas-Kra}) or, more generally, when
the action of $G$ on $X$ is tame (see \cite[p.~206]{Be-Me} and
\cite[formula~(40)]{Ko:2002a}), the dimension of the tangent
space of $D$ is known to be equal to $3g_Y-3+r$ where $g_Y$
denotes the genus of the quotient curve $Y= X/G$ and $r$
denotes the cardinality of the branch locus $Y_\textrm{ram}$;
note that $3g_Y-3$ is the dimension of the moduli space of
curves of genus $g_Y$ and each branch point adds one further
degree of freedom in our deformation problem, as expected.

In the case of wild ramification, the computation of the
dimension of the tangent space of $D$ turns out to be a
difficult problem. In this paper, we disregard the known case
of tame ramification and assume that the characteristic $p$ of
$k$ is positive and that $G$ is a $p$-group.

Formulas for the dimension of $H^1(G, \cT_X)$ can be derived
from results in \cite{Be-Me} and \cite{CK} when in addition $G$
is a cyclic group  or $X$ is an ordinary curve (or, more
generally, the action of $G$ on $X$ is weakly ramified), see
Remarks~\ref{secondproof} and~\ref{remarkweakly}. Furthermore,
when $G$ is an elementary abelian group, the second-named
author computes this dimension in \cite{Ko:2002a} using the
Lyndon-Hochschild-Serre spectral sequence.

The object of this paper is to pursue the following alternative
method for computing the dimension of $H^1(G,\cT_X)$. In
\cite{KoJPAA06}, the second-named author has shown that this
dimension is equal to the dimension of the space $H^0(X,
\Omega_X^{\otimes 2})_G$ of coinvariants of $G$ acting on the
space of global holomorphic quadratic differentials on $X$. We
will use known results on the Galois module structure of
Riemann-Roch spaces $H^0(X,\cO_X(D))$ for certain $G$-invariant
divisors $D$ on $X$ to determine the dimension of
$H^0(X,\Omega_X^{\otimes 2})_G$.

We now fix some notations. Let $P_1, \ldots, P_r$ denote a
complete set of representatives for the $G$-orbits of ramified
points. The corresponding decomposition groups, ramification
indices and coefficients of the ramification divisor are
denoted by $G(P_j)$, $e_0(P_j)$ and $d(P_j)$, $j=1, \ldots, r$,
respectively.

Section~\ref{cyliccase} of this paper will deal with the case
when $G$ is cyclic. Generalizing a result of Nakajima
\cite[Theorem~1]{Nak:86} from the case when $G$ is cyclic of
order~$p$ to the case when $G$ is an arbitrary cyclic
$p$-group, Borne \cite[Theorem~7.23]{Borne06} has explicitly
determined the Galois module structure of $H^0(X, \cO_X(D))$
for any $G$-invariant divisor $D$ on $X$ of degree greater than
$2g_X-2$. Although the formulation of Borne's theorem requires
quite involved definitions and is in particular difficult to
state, its consequence for the dimension of
$H^0(X,\Omega_X^{\otimes 2})_G$ is simple:
\[\dim_k H^0(X, \Omega_X^{\otimes 2})_G = 3g_Y-3 + \sum_{j=1}^r
\left\lfloor \frac{2 d(P_j)}{e_0(P_j)} \right\rfloor,\] see
Corollary~\ref{dimensioncoinvariants}. This result can also be
derived from core results in \cite{Be-Me} and becomes
\cite[Proposition~4.1.1]{Be-Me} if $G$ is cyclic.

In Section~\ref{sec3} we assume that the action of $G$ on $X$
is weakly ramified, i.e.\ that the second ramification group
$G_2(P)$ vanishes for all $P \in X$, and prove the formula
\[\dim_k H^0(X, \Omega_X^{\otimes 2})_G = 3g_Y-3 + \sum_{j=1}^r
\log_p |G(P_j)| +
\begin{cases}
2r & \textrm{if } p >3\\
r & \textrm{if } p=2 \textrm{ or } 3,
\end{cases} \]
see Theorem~3.1. This formula matches up with results of
Cornelissen and Kato \cite[Theorem~4.5 and Theorem~5.1(b)]{CK}
for the deformation of ordinary curves, see
Remark~\ref{remarkweakly}. The proof of our result proceeds
along the following lines. Let $W$ denote the space of global
meromorphic quadratic differentials on $X$ which may have a
pole of order at most~$3$ at each ramified point and which are
holomorphic everywhere else. A result of the first-named author
\cite[Theorem~4.5]{Koeck:04} implies that $W$ is a free
$k[G]$-module and that its rank and hence the dimension of
$W_G$ is equal to $3(g_Y-1+r)$, see
Proposition~\ref{projective}. The difference between this
dimension and the dimension of $H^0(X, \Omega_X^{\otimes 2})_G$
can be expressed in terms of group homology of $G$ acting on
the space of global sections of a certain skyscraper sheaf and
can be computed using a spectral sequence argument, see
Proposition~\ref{grouphomology} and
Lemma~\ref{homologicalalgebra}.

In Section~\ref{pranksection} we first show that there exists
an {\em effective} $G$-invariant canonical divisor $D$ on $X$,
see Lemma~\ref{efecDG}. This allows us to split the
$k[G]$-module $H^0(X, \Omega_X^{\otimes 2}) \cong H^0(X,
\Omega_X(D))$ into its semisimple and nilpotent part with
respect to the corresponding Cartier operator \cite{Stalder04}:
\[H^0(X, \Omega_X(D)) \cong H_D^\mathrm{s} \oplus
H_D^\mathrm{n}.\] Little seems to be known about the
$k[G]$-module structure of the nilpotent part $H_D^\mathrm{n}$.
We use a result of Nakajima \cite[Theorem~1]{Nak:85} to show
that the semisimple part $H_D^\mathrm{s}$ is a free
$k[G]$-module and to compute its rank and hence the dimension
of $(H_D^\mathrm{s})_G$ provided $D$ satisfies further
conditions, see Theorem~\ref{semisimple}. In cases the
$k[G]$-module structure of $H^0(X, \Omega_X^{\otimes 2})$ is
known (such as Section~\ref{cyliccase}) we then also get
information about $H_D^\mathrm{n}$, see
Corollary~\ref{directsummand}.

Section~\ref{account} is an appendix and gives an account of a
structure theorem (Theorem~\ref{structuretheorem}) for weakly
ramified Galois extensions of local fields. This structure
theorem is used in Section~\ref{sec3}, but only in the case
$p=2$. We finally derive a feature of the Weierstrass semigroup
at any ramified point of $X$ if $p=2$ and the action of $G$ on
$X$ is weakly ramified, see Corollary~\ref{Weierstrass}.

{\bf Acknowledgments:} The second-named author would like  to
thank Gunther Cornelissen  and  Michel Matignon for pointing
him to the work of Niels Borne, Bernhard K\"ock and Nicolas
Stalder.

{\bf Notations.} Throughout this paper $X$ is a connected
smooth projective curve over an algebraically closed field $k$
of positive characteristic $p$ and $G$ is a finite subgroup of
the automorphism group of $X$ whose order is a power of $p$. We
also assume that the genus $g_X$ of $X$ is at least~$2$. For
any point $P \in X$ let $G(P)= \{g \in G: g(P) =P\}$ denote the
decomposition group and $e_0(P) = |G(P)|$ the ramification
index at $P$. The $i^\mathrm{th}$ ramification group at $P$ in
lower notation  is
\[G_i(P) = \{g \in
G(P): g(t_P)-t_P \in (t_P^{i+1})\}\] where $t_P$ is a local
parameter at $P$. Let $e_i(P)$ denote its order. Note that
$e_0(P) = e_1(P)$ because $G$ is a $p$-group. Let
\[\pi: X \rightarrow Y:= X/G\]
denote the canonical projection, and let $g_Y$ denote the genus
of $Y$. We use the notation $X_\mathrm{ram}$ for the set of
ramification points and $Y_{\rm ram} = \pi(X_\mathrm{ram})$ for
the set of branch points of $\pi$. We fix a complete set $P_1,
\ldots, P_r$ of representatives for the $G$-orbits of ramified
points in $X$; in particular we have $r=|Y_{\rm ram}|$. The
sheaves of differentials and of relative differentials on $X$
are denoted by $\Omega_X$ and $\Omega_{X/Y}$, respectively.
Furthermore for any divisor $D$ on $X$ we use the notation
$\Omega_X(D)$ for the sheaf $\Omega_X \otimes_{\mathcal{O}_X}
\mathcal{O}_X(D)$. If $D= \sum_{P\in X} n_P [P]$ is an
effective divisor, we write
\[D_{\rm red} := \sum_{P\in X: \; n_P \not= 0} [P]\] for the
associated reduced divisor. Let
\[R= \sum_{P \in X} d(P) [P]\]
denote the ramification divisor of $\pi$; then $R_{\rm red} =
\sum_{P \in X_{\rm ram}} [P]$ is the reduced ramification
divisor. Note that
\[d(P) = \sum_{i=0}^\infty (e_i(P) -1)\]
by Hilbert's different formula \cite[Theorem~3.8.7]{StiBo}. The
notation $K_Y$ stands for a canonical divisor on $Y$; the
divisor $K_X:= \pi^* K_Y + R$ is then a $G$-invariant canonical
divisor on $X$ by \cite[Proposition~IV~2.3]{Hartshorne:77}. For
any real number $x$, the notation $\lfloor x \rfloor$ means the
largest integer less than or equal to $x$.

\section{The Cyclic Case}\label{cyliccase}

In this section we assume that the group $G$ is cyclic of order
$p^\nu$ and compute the dimension $\dim_k H^1(G, \cT_X) =
\dim_k H^0(X,\Omega_X^{\otimes 2})_G$ of the tangent space of
the deformation functor associated with $G$ acting on $X$, see
Corollary~\ref{dimensioncoinvariants}. We derive
Corollary~\ref{dimensioncoinvariants} from
Theorem~\ref{numberofindecomposables} which in turn is a
consequence of a theorem of Borne \cite[Theorem~7.23]{Borne06}.
While a lot of notations and explanations are needed to
formulate Borne's very fine statement,
Theorem~\ref{numberofindecomposables} is easy to state and
sufficient to derive the important
Corollary~\ref{dimensioncoinvariants}. A different way of
formulating Corollary~\ref{dimensioncoinvariants} is given in
Lemma~\ref{HasseArf} and a different way of proving it is
outlined in Remark~\ref{secondproof}.

Let $\sigma$ denote a generator of $G$. Let $V$ denote the
$k[G]$-module with $k$-basis $e_1, \ldots, e_{p^\nu}$ and
$G$-action given by $\sigma(e_1)= e_1$ and $\sigma(e_l) = e_l +
e_{l-1}$ for $l= 2, \ldots, p^\nu$. It is well-known that the
submodules $V_l := \mathrm{Span}_k(e_1, \ldots, e_l)$, $l= 1,
\ldots, p^\nu$, of $V$ form a set of representatives for the
set of isomorphism classes of indecomposable $k[G]$-modules. In
particular for every finitely generated $k[G]$-module $M$ there
are some integers $m_l(M) \ge 0$, $l=1, \ldots, p^\nu$, such
that
\[M \cong \bigoplus_{l=1}^{p^\nu} \; V_l^{\bigoplus m_l(M)};\]
let $\mathrm{Tot}(M) = \sum_{l=1}^{p^\nu} m_l(M)$ denote the
total number of direct summands.

\begin{theorem}\label{numberofindecomposables}
Let $D= \sum_{P \in X} n_P [P]$ be a $G$-invariant divisor on
$X$ such that $\mathrm{deg}(D) > 2g_X -2$. Then we have
\[\mathrm{Tot}(H^0(X, \cO_X(D))) = 1- g_Y + \sum_{Q \in Y} \left\lfloor
\frac{n_{\tilde{Q}}}
{e_0({\tilde{Q}})} \right\rfloor \] where ${\tilde{Q}}$
denotes any point in the fiber $\pi^{-1}(Q)$.
\end{theorem}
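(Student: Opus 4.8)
The plan is to reduce the statement to a count of indecomposable summands via Borne's theorem, which describes $H^0(X, \cO_X(D))$ as a sum of the $V_l$'s with multiplicities expressed through certain local data at the branch points. First I would recall the general mechanism: since $\deg(D) > 2g_X - 2$, higher cohomology vanishes, and Borne's theorem gives an explicit decomposition $H^0(X,\cO_X(D)) \cong \bigoplus_{l=1}^{p^\nu} V_l^{\oplus m_l}$ where each $m_l$ is assembled from the multiplicities $n_P$ and the ramification filtration data $e_i(\tilde Q)$ at the branch points $Q \in Y_{\rm ram}$, together with a ``generic'' contribution governed by $\deg(\pi^*K_Y)$ or equivalently by $g_Y$. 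The key observation is that $\mathrm{Tot}(M) = \sum_l m_l(M)$ is a far cruder invariant than the full list $(m_l)$, and summing Borne's formula over $l$ collapses the intricate combinatorics.

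The central computation is therefore: sum Borne's expression for the $m_l$ over all $l = 1, \ldots, p^\nu$ and simplify. I expect the ``away from ramification'' part of $D$ to contribute, by Riemann–Roch on $Y$ applied to the pushed-down divisor, a term of the shape $1 - g_Y$ plus $\sum_{Q\notin Y_{\rm ram}} n_{\tilde Q}$ (here $e_0(\tilde Q) = 1$, so $\lfloor n_{\tilde Q}/e_0(\tilde Q)\rfloor = n_{\tilde Q}$, consistent with the claimed formula), while at each branch point $Q$ the local multiplicities telescope to exactly $\lfloor n_{\tilde Q}/e_0(\tilde Q)\rfloor$. The most efficient route is to invoke the projection formula and the equality $\pi_* \cO_X(D)$-type identities together with the fact that, over the regular locus, $\pi$ is étale so $H^0$ behaves like a free-module count; the ramified contribution is then isolated locally at each $Q$ and matched against the floor term by a direct manipulation of Borne's local invariants, using $e_0(\tilde Q) = p^{\,\mathrm{something}}$ and that the ramification breaks refine the interval $[0, e_0(\tilde Q))$.

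An alternative, in case unwinding Borne's definitions proves too cumbersome, is to argue by dévissage on $\nu$: for $\nu = 1$ the statement reduces to Nakajima's theorem \cite[Theorem~1]{Nak:86}, and the inductive step passes to the quotient $X/G'$ by the unique subgroup $G'$ of order $p$, using that $\mathrm{Tot}$ is additive on the ``layers'' of the module filtration coming from the subgroup structure and that floors of the form $\lfloor n/p^\nu \rfloor$ can be built up from $\lfloor \,\cdot\,/p\rfloor$ iteratively. Either way, the genuine content is bookkeeping: the topological term $1 - g_Y$ comes from Riemann–Roch on $Y$ and is insensitive to the $G$-action, and each branch point contributes precisely $\lfloor n_{\tilde Q}/e_0(\tilde Q)\rfloor$.

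The main obstacle will be the translation step: Borne's \cite[Theorem~7.23]{Borne06} is, as the authors themselves stress, stated in a formalism (involving ramification modules, adjusted Weil divisors on the root stack, or similar) that is cumbersome to invoke directly, so the delicate part is extracting from it a clean closed form for $\sum_l m_l$ without getting lost in the notational overhead — i.e., verifying that all the ramification-theoretic corrections, which genuinely matter for the individual $m_l$, cancel in the sum and leave only the single floor term per branch point. Once that cancellation is established, the rest is a routine Riemann–Roch computation on $Y$.
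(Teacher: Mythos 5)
Your overall strategy --- invoke Borne's theorem, sum the multiplicities $m_l$ over $l$, and watch the complicated local data collapse to a single floor term per point --- is exactly the route the paper takes. But as written the proposal is a plan rather than a proof: the two steps that constitute the entire mathematical content are both flagged as ``to be verified'' and neither is carried out. First, the ``cancellation'' you hope for is not a delicate cancellation of ramification corrections at all but a clean telescoping: Borne's theorem gives $m_l = \deg(D(l)) - \deg(D(l+1))$ for $l = 1, \ldots, p^\nu-1$ and $m_{p^\nu} = 1 - g_Y + \deg(D(p^\nu))$ for certain divisors $D(l)$ on $Y$, so that $\mathrm{Tot}(H^0(X,\cO_X(D))) = 1 - g_Y + \deg(D(1))$ immediately. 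Identifying this structure is the point of the ``translation step'' you defer. Second, one must actually compute $D(1)$, which Borne defines as the iterated operation $(\pi_\nu)_*^0 \cdots (\pi_1)_*^0(D)$ through the tower of intermediate quotients $X = X_0 \to X_1 \to \cdots \to X_\nu = Y$, where $(\pi_\mu)_*^0(E) = \lfloor \frac{1}{p}(\pi_\mu)_*(E)\rfloor$. Showing $D(1) = \sum_{Q}\lfloor n_{\tilde Q}/e_0(\tilde Q)\rfloor\,[Q]$ requires an induction up the tower using that decomposition groups in a cyclic $p$-group form a chain (so at each stage a point is either unramified or totally ramified of degree $p$) together with the arithmetic identity $\bigl\lfloor \lfloor n/p^{\nu-1}\rfloor / p \bigr\rfloor = \lfloor n/p^{\nu}\rfloor$. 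None of this appears in the proposal; the remark about the projection formula and \'etaleness over the unramified locus is a red herring, since unramified points are covered by the same floor formula with $e_0(\tilde Q)=1$.

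Your alternative d\'evissage on $\nu$ could be made to work, but it rests on an unstated key lemma: for $G$ cyclic of order $p^\nu$ with unique subgroup $G'$ of order $p$, one has $\mathrm{Tot}_{k[G]}(M) = \mathrm{Tot}_{k[G/G']}(M^{G'})$ for every finitely generated $k[G]$-module $M$ (because $V_l^{G'} = V_{\min(l,p^{\nu-1})}$ is again indecomposable over $k[G/G']$), together with the identification $H^0(X,\cO_X(D))^{G'} = H^0(X/G', \cO_{X/G'}(D'))$ with $D'$ the floor--pushforward divisor, and a check that $\deg(D') > 2g_{X/G'}-2$ so the inductive hypothesis applies. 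The phrase ``$\mathrm{Tot}$ is additive on the layers of the module filtration'' is not the right statement and would not compile into an argument as it stands. Either route is viable, but in both cases the proposal stops exactly where the proof begins.
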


\begin{rem}
Our global assumption that $g_X \ge 2$ is not necessary for
this theorem.
\end{rem}

\begin{proof}
Borne defines certain divisors $D(l)$, $l=1, \ldots, p^\nu$, on
$Y$ and proves that
\[m_l(H^0(X, \cO_X(D))) = \mathrm{deg}(D(l)) - \mathrm{deg}(D(l+1)) \quad
\textrm{for } l = 1, \ldots, p^\nu -1\] and
\[m_{p^\nu}(H^0(X,\cO_X(D))) = 1- g_Y + \mathrm{deg}(D(p^\nu)),\]
see \cite[Theorem~7.23]{Borne06}. (The reader who wants to
match this with \cite{Borne06} may find useful to observe that
$(p-1) + (p-1) p + \ldots + (p-1)p^{\nu-1}$ is the $p$-adic
expansion of $p^\nu - 1$.) We therefore have
\[\mathrm{Tot}(H^0(X,\cO_X(D))) = \sum_{l=1}^{p^\nu}
m_l(H^0(X,\cO_X(D))) = \mathrm{deg}(D(1)) + 1 - g_Y.\] We now
explain the definition of $D(1)$. For each $\mu=0, \ldots, \nu$
let $H_\mu$ denote the unique subgroup of $G$ of order $p^\mu$
and set $X_\mu := X/H_\mu$. Furthermore let $\pi_\mu : X_{\mu
-1} \rightarrow X_\mu$ denote the canonical projection. For
each divisor $E$ on $X_{\mu - 1}$ set $(\pi_\mu)_*^0(E) :=
\left\lfloor \frac{1}{p}(\pi_\mu)_*(E)\right\rfloor$ where
$\left\lfloor \cdot \right\rfloor$ denotes the integral part of
a divisor, taken coefficient by coefficient. Then $D(1)$ is
defined as
\[D(1) = (\pi_\nu)_*^0 \ldots (\pi_1)_*^0 (D),\]
see \cite[Definition~7.22 and Theorem~7.23]{Borne06}. Now
Theorem~\ref{numberofindecomposables} follows from the formula
\[D(1) = \sum_{Q\in Y} \left\lfloor
\frac{n_{\tilde{Q}}}{e_0({\tilde{Q}})}\right\rfloor [Q].\] We prove this
formula by induction on $\nu$. It is obvious if $\nu = 0$. So
let $\nu \ge 1$. By the inductive hypothesis we have
\[(\pi_{\nu -1})_*^0 \ldots (\pi_1)_*^0(D) = \sum_{R \in X_{\nu -1}}
\left\lfloor \frac{n_{\tilde{R}}}{|G_0(\tilde{R}) \cap H_{\nu
-1}|} \right\rfloor [R]\] where, as above, $\tilde{R}$ denotes any
point in the fiber $(\pi_{\nu -1} \circ \ldots \circ
\pi_1)^{-1}(R)$. Thus, if $Q \in Y$ is not a branch point of $\pi_\nu$,
the coefficient of the divisor $D(1)$ at $Q$ is equal to
\[\left\lfloor \frac{n_{\tilde{Q}}}{|G_0({\tilde{Q}}) \cap H_{\nu -
1}|}
\right\rfloor = \left\lfloor \frac{n_{{\tilde{Q}}}} {|G_0({\tilde{Q}})|}
\right\rfloor = \left\lfloor \frac{n_{\tilde{Q}}}{e_0({\tilde{Q}})}
\right\rfloor,\] as desired. Otherwise we have $|G_0({\tilde{Q}})
\cap H_{\nu -1} | = |H_{\nu -1}| = p^{\nu -1}$ and the
coefficient of $D(1)$ at $Q$ is equal to
\[\left\lfloor \frac{\left\lfloor \frac{n_{\tilde{Q}}}{p^{\nu -1}}
\right\rfloor}{p} \right\rfloor\,\, \stackrel{(*)}{=} \left\lfloor
\frac{n_{\tilde{Q}}}{p^\nu} \right\rfloor = \left\lfloor
\frac{n_{\tilde{Q}}}{e_0({\tilde{Q}})} \right\rfloor, \] again as
stated. (To see $(*)$ write $n_{\tilde{Q}}$ in the form $ap^\nu
+ b p^{\nu -1} + c$ with some $a \ge 0$, $b \in \{0, \ldots,
p-1\}$ and $c \in \{0, \ldots, p^{\nu -1}\}$.)
\end{proof}

\begin{cor}\label{dimensioncoinvariants}
We have
\[\dim_k H^0(X, \Omega_X^{\otimes 2})_G = 3g_Y -3 + \sum_{j=1}^r
\left\lfloor \frac{2d(P_j)}{e_0(P_j)}\right\rfloor.\]
\end{cor}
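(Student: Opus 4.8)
The plan is to deduce Corollary~\ref{dimensioncoinvariants} from Theorem~\ref{numberofindecomposables} by applying the latter to a suitable $G$-invariant divisor and then reading off the dimension of the coinvariants from the multiplicities of the indecomposable summands. First I would recall that, by the discussion in the Notations section, the divisor $K_X := \pi^* K_Y + R$ is a $G$-invariant canonical divisor on $X$, so $\Omega_X^{\otimes 2} \cong \cO_X(2K_X)$ as $G$-equivariant sheaves and hence $H^0(X, \Omega_X^{\otimes 2})_G \cong H^0(X, \cO_X(2K_X))_G$ as $k$-vector spaces. Since $\deg(2K_X) = 4g_X - 4 > 2g_X - 2$ (using $g_X \ge 2$), the divisor $D := 2K_X$ satisfies the hypotheses of Theorem~\ref{numberofindecomposables}.

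The next step is to relate $\dim_k H^0(X, \cO_X(D))_G$ to $\mathrm{Tot}(H^0(X, \cO_X(D)))$. For a cyclic $p$-group $G$ of order $p^\nu$, each indecomposable module $V_l$ is cyclic and satisfies $\dim_k (V_l)_G = 1$ (the coinvariants $V_l/(\sigma - 1)V_l$ are one-dimensional, since $(\sigma-1)$ acts on $V_l$ with a one-dimensional cokernel). Hence for any finitely generated $k[G]$-module $M \cong \bigoplus_l V_l^{\oplus m_l(M)}$ we get $\dim_k M_G = \sum_l m_l(M) = \mathrm{Tot}(M)$. Applying this to $M = H^0(X, \cO_X(D))$ and invoking Theorem~\ref{numberofindecomposables} yields
\[\dim_k H^0(X, \Omega_X^{\otimes 2})_G = 1 - g_Y + \sum_{Q \in Y} \left\lfloor \frac{n_{\tilde Q}}{e_0(\tilde Q)} \right\rfloor,\]
where $n_{\tilde Q}$ is the coefficient of $D = 2K_X$ at a point $\tilde Q$ above $Q$.

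It then remains to compute the coefficients $n_{\tilde Q}$ of $2K_X = 2\pi^* K_Y + 2R$ and simplify the sum. For an unramified point $\tilde Q$ we have $e_0(\tilde Q) = 1$ and $n_{\tilde Q} = 2 \cdot (\text{coefficient of } K_Y \text{ at } Q)$, so these contribute $\sum_{Q \in Y_{\mathrm{nr}}} n_{\tilde Q} = 2\deg(K_Y) = 4g_Y - 4$ in total (a standard cancellation: the floor does nothing, and summing the pulled-back canonical coefficients recovers $2\deg K_Y$, after also accounting for the ramification contribution to $\pi^*K_Y$ at the branch points). At a branch point $Q = \pi(P_j)$ with $\tilde Q$ in the orbit of $P_j$, the coefficient of $\pi^* K_Y$ is $e_0(P_j) \cdot v_Q(K_Y)$ and the coefficient of $R$ is $d(P_j)$, so $n_{\tilde Q} = 2e_0(P_j) v_Q(K_Y) + 2d(P_j)$, and since $e_0(P_j) \mid 2e_0(P_j)v_Q(K_Y)$ we get $\lfloor n_{\tilde Q}/e_0(P_j)\rfloor = 2v_Q(K_Y) + \lfloor 2d(P_j)/e_0(P_j)\rfloor$. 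Collecting terms, the canonical contributions from ramified and unramified points together sum to $2\deg(K_Y) = 4g_Y - 4$, so altogether
\[\dim_k H^0(X, \Omega_X^{\otimes 2})_G = 1 - g_Y + (4g_Y - 4) + \sum_{j=1}^r \left\lfloor \frac{2d(P_j)}{e_0(P_j)} \right\rfloor = 3g_Y - 3 + \sum_{j=1}^r \left\lfloor \frac{2d(P_j)}{e_0(P_j)} \right\rfloor.\]
The main obstacle I anticipate is the bookkeeping in this last step: one must carefully separate the floor function's behavior at ramified versus unramified points and verify that the pieces of $2\pi^*K_Y$ reassemble into $2\deg K_Y$ after the floors are taken; choosing $K_Y$ effective (or at least keeping track of signs) streamlines this, but the computation is entirely elementary once Theorem~\ref{numberofindecomposables} and the identity $\dim_k M_G = \mathrm{Tot}(M)$ are in hand.
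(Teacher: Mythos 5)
Your proposal is correct and follows essentially the same route as the paper: apply Theorem~\ref{numberofindecomposables} to $D=2K_X=2\pi^*K_Y+2R$, use $\dim_k(V_l)_G=1$ to identify $\dim_k M_G$ with $\mathrm{Tot}(M)$, and observe that the floor absorbs the $2e_0(\tilde Q)m_Q$ part exactly, so the canonical contributions reassemble into $2\deg K_Y=4g_Y-4$. The only cosmetic slip is the intermediate claim that the unramified points alone contribute $2\deg(K_Y)$, which your own parenthetical later corrects.
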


\begin{proof}
Let $K_Y = \sum_{Q \in Y} m_Q [Q]$ be a canonical divisor on
$Y$ and let $K_X$ denote the canonical $G$-invariant divisor
$\pi^*K_Y + R$ on $X$. As $\dim (V_l)_G = 1$ for all $l = 1,
\ldots, p^\nu$ we have
\begin{align*}
\dim_k H^0(X, &\Omega_X^{\otimes 2})_G = \mathrm{Tot}(H^0(X, \Omega_X^{\otimes 2}))\\
& = \mathrm{Tot}(H^0(X, \cO_X(2K_X)))
=\mathrm{Tot} (H^0(X, \cO_X(2 \pi^* K_Y + 2R))).
\end{align*}
For any $Q \in Y$ the coefficient of the divisor $2\pi^*K_Y +
2R$ at any point $\tilde{Q} \in \pi^{-1}(Q)$ is
$2e_0(\tilde{Q}) m_Q + 2 d(\tilde{Q})$. Thus from
Theorem~\ref{numberofindecomposables} we obtain
\begin{eqnarray*}
\lefteqn{\dim_k H^0(X, \Omega_X^{\otimes 2})_G}\\
&=& 1- g_Y + 2 \left(\sum_{Q \in Y} m_Q\right) + \sum_{Q \in Y}
\left\lfloor \frac{2d(\tilde{Q})}{e_0(\tilde{Q})} \right\rfloor \\
&=& 3g_Y-3 + \sum_{j=1}^r \left\lfloor \frac{2d(P_j)}{e_0(P_j)} \right\rfloor,
\end{eqnarray*}
as stated.
\end{proof}

\begin{rem}\label{secondproof}
The following chain of equalities sketches a different approach
to prove Corollary~\ref{dimensioncoinvariants} based on core
results of the paper \cite{Be-Me} by Bertin and M\'ezard.
\begin{eqnarray*}
\lefteqn{\dim_k H^0(X, \Omega_X^{\otimes 2})_G}\\
&=& \dim_k H^1(G, \cT_X)\\
&=& \dim_k H^1(Y, \pi_*^G(\cT_X)) + \sum_{j=1}^r \dim_k H^1(G(P_j), \hat{\cT}_{X, P_j})\\
&=& 3g-3 + \sum_{j=1}^r \left\lceil \frac{d(P_j)}{e_0(P_j)} \right\rceil +
\sum_{j=1}^r \left(\left\lfloor \frac{2d(P_j)}{e_0(P_j)} \right\rfloor - \left\lceil \frac{d(P_j)}{e_0(P_j)} \right\rceil \right)\\
&=&3g_Y-3 + \sum_{j=1}^r \left\lfloor \frac{2d(P_j)}{e_0(P_j)} \right\rfloor.
\end{eqnarray*}
The statements (together with some notations) used in the above
chain of equalities can be found in \cite{KoJPAA06},
\cite[pp.~205-206]{Be-Me}, \cite[formula~(40)]{Ko:2002a},
\cite[Proposition~4.1.1]{Be-Me}.
\end{rem}

The following lemma gives a different interpretation of the
term $\lfloor \frac{2d(P_j)}{e_0(P_j)} \rfloor$ in
Corollary~\ref{dimensioncoinvariants}. To simplify notation we
fix one of the points $P_1, \ldots, P_r$ and write just $P$ for
this point and just $d$ and $e_0, e_1, \ldots$ for $d(P)$ and
$e_0(P)$, $e_1(P), \ldots$, respectively. Furthermore, $N$ and
$M$ denote the highest jumps in the lower ramification
filtration and in the upper ramification filtration of $G(P)$,
respectively.

\begin{lemma}\label{HasseArf}
We have
\[\left\lfloor \frac{2d}{e_0} \right\rfloor = 2(1+M) +
\left\lfloor \frac{-2(1+N)}{e_0} \right\rfloor.\]
\end{lemma}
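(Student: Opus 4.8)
The plan is to unwind both sides using the relationship between the ramification divisor, the lower ramification jumps, and the Hasse--Arf theorem connecting lower and upper numbering. First I would rewrite the different exponent $d$ via Hilbert's formula $d = \sum_{i\ge 0}(e_i - 1)$, noting that only finitely many terms are nonzero and that $e_i = |G_i(P)|$ is a power of $p$ which is constant on the intervals between successive lower jumps, with $G_i(P)$ trivial for $i > N$. Recalling that $e_0 = e_1$, the key quantity to isolate is $\frac{2d}{e_0}$; writing $d = (e_0 - 1) + \sum_{i\ge 1}(e_i - 1)$ and separating the ``top'' contribution, one sees that $\frac{2d}{e_0}$ differs from an integer by a controlled fractional part governed by the subgroups $G_i(P)$ for $1 \le i \le N$.

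Next I would bring in the upper numbering. The highest upper jump $M$ is, by definition of the Herbrand $\psi$ function, the image of $N$ under $\psi_{G(P)}$, i.e.\ $M = \psi(N)$ where $\psi(N) = \sum_{i=1}^{N} [G_1(P) : G_i(P)] = \frac{1}{e_0}\sum_{i=1}^N e_i$ (using $e_1 = e_0$). Hence $e_0(1+M) = e_0 + \sum_{i=1}^N e_i = \sum_{i=0}^N e_i$, and therefore $e_0(1+M) - (1+N) = \sum_{i=0}^N e_i - (1+N) = \sum_{i=0}^N (e_i - 1) = d$, since $e_i - 1 = 0$ for $i > N$. This gives the clean identity $d = e_0(1+M) - (1+N)$, which is the arithmetic heart of the lemma.

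From $d = e_0(1+M) - (1+N)$ we get $2d = 2e_0(1+M) - 2(1+N)$, so $\frac{2d}{e_0} = 2(1+M) + \frac{-2(1+N)}{e_0}$, and applying $\lfloor \cdot \rfloor$ and pulling out the integer $2(1+M)$ yields
\[
\left\lfloor \frac{2d}{e_0} \right\rfloor = 2(1+M) + \left\lfloor \frac{-2(1+N)}{e_0} \right\rfloor,
\]
as claimed. The main obstacle is purely bookkeeping: correctly invoking the definition of the Herbrand function in the form $\psi(N) = \frac{1}{e_0}\sum_{i=1}^N e_i$ (equivalently, that the Hasse--Arf theorem guarantees $M = \psi(N)$ is an integer, though we do not even need integrality of $M$ for the floor manipulation), and being careful that $e_i = 1$ exactly for $i \ge N+1$ so that the finite sum $\sum_{i=0}^N(e_i-1)$ really equals $d$. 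Once the identity $d = e_0(1+M) - (1+N)$ is in hand, the rest is immediate.
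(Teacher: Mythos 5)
Your proof is correct, and it reaches the paper's key identity $d = e_0(1+M) - (1+N)$ by a genuinely different (and shorter) route. The paper invokes the Hasse--Arf theorem to write the lower jumps explicitly as $i_j = a_0 + pa_1 + \cdots + p^{j-1}a_{j-1}$ and the upper jumps as $a_0 + \cdots + a_{j-1}$, and then telescopes the sum $d=\sum_i (e_i-1)$ block by block; you instead get the identity in one line from Hilbert's different formula together with the definition of the Herbrand function, via $e_0(1+M) = e_0 + e_0\varphi(N) = \sum_{i=0}^{N} e_i$. This is cleaner and makes transparent that the explicit shape of the jumps is irrelevant. Two small corrections, though. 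First, the function taking lower to upper numbering is $\varphi$, not $\psi$, and the displayed sum $\sum_{i=1}^{N}[G_1(P):G_i(P)]$ is not what you want: that index equals $e_0/e_i$, whereas the correct formula is $M=\varphi(N)=\sum_{i=1}^{N} 1/[G_0(P):G_i(P)] = \frac{1}{e_0}\sum_{i=1}^{N} e_i$, which is the expression you actually use afterwards. Second, your parenthetical claim that integrality of $M$ is not needed is wrong: to pull $2(1+M)$ out of the floor you need it to be an integer, and a priori $\varphi(N)$ is only a rational with denominator dividing $e_0$; here this is supplied by the Hasse--Arf theorem, which applies because $G$ (hence $G(P)$) is cyclic in this section. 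Neither slip affects the validity of the argument once these are stated correctly.
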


\begin{proof}
Let $k:= \log_p e_0$. By the Hasse-Arf theorem (see the example
on page~76 in \cite{SeL}) there exist positive integers $a_0,
a_1, \ldots, a_{k-1}$ so that the sequence of jumps in the
lower ramification filtration is
\[i_1 = a_o, \quad i_2 = a_0 + pa_1, \quad \ldots, \quad i_k = a_0 + pa_1 +
\ldots + p^{k-1} a_{k-1}\] and the sequence of jumps in the
upper ramification filtration is
\[a_0, \quad a_0+a_1, \quad \ldots, \quad a_0 + \ldots + a_{k-1}.\]
We therefore obtain
\begin{eqnarray*}
\lefteqn{d= \sum_{i=0}^{i_1} (e_{i_1} - 1) +
\sum_{i=i_1 + 1}^{i_2} (e_{i_2} -1) + \ldots +
\sum_{i=i_{k-1} +1}^{i_k} (e_{i_k} -1)}\\
&=& \sum_{i=0}^{i_1}(p^k-1) +
\sum_{i=i_1 +1}^{i_2}(p^{k-1} -1) + \ldots +
\sum_{i=i_{k-1} + 1}^{i_k} (p-1)\\
&=& (1+i_1)(p^k-1) + (i_2 - i_1)(p^{k-1} - 1) + \ldots +
(i_k - i_{k-1})(p-1)\\
&=& (1+a_0)(p^k -1) + (pa_1)(p^{k-1} -1) + \ldots + (p^{k-1} a_{k-1})(p-1)\\
&=& (1+ a_0 + \ldots + a_{k-1}) p^{k} - (1+a_0+ pa_1 + \ldots + p^{k-1} a_{k-1})\\
&=& (1+M)p^k - (1+N)
\end{eqnarray*}
and hence
\[\left\lfloor \frac{2d}{e_0} \right\rfloor =
\left\lfloor \frac{2(1+M)p^k - 2(1+N)}{p^k} \right\rfloor =
2(1+M) + \left\lfloor \frac{-2(1+N)}{e_0} \right\rfloor,\] as stated.
\end{proof}

\section{The Weakly Ramified Case} \label{sec3}

In this section we assume that the cover $\pi: X \rightarrow Y$
is weakly ramified, i.e.\ that $G_i(P)$ is trivial for all $P
\in X$ and all $i \ge 2$, and prove the following explicit
formula for the dimension $\dim_k H^1(G, \T_X) = \dim_k H^0(X,
\Omega_X^{\otimes 2})_G$ of the tangent space of the
deformation functor associated with $G$ acting on~$X$.

\begin{theorem}\label{weaklythm}
We have
\[\dim_k H^0(X, \Omega_X^{\otimes 2})_G = 3g_Y-3 + \summ_{j=1}^r
\log_p|G(P_j)| +
\begin{cases}
2r & \textrm{if } p >3\\
r  & \textrm{if } p=2 \textrm{ or } 3.
\end{cases}\]
\end{theorem}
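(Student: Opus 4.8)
The plan is to follow the strategy already outlined in the introduction: realize $H^0(X,\Omega_X^{\otimes 2})_G$ as the cokernel (in the sense of coinvariants) of a $k[G]$-module $W$ of meromorphic quadratic differentials with bounded poles at the ramified points, compute $\dim_k W_G$ via projectivity, and then correct by a group-homology term coming from a skyscraper sheaf. Concretely, let $W = H^0(X, \Omega_X^{\otimes 2}(3 R_{\rm red}))$, the space of quadratic differentials allowed a pole of order at most $3$ at each ramified point. There is a short exact sequence of $k[G]$-modules
\[
0 \longrightarrow H^0(X,\Omega_X^{\otimes 2}) \longrightarrow W \longrightarrow \mathcal{S} \longrightarrow \cdots
\]
where $\mathcal{S}$ is the space of global sections of the skyscraper sheaf $\Omega_X^{\otimes 2}(3R_{\rm red})/\Omega_X^{\otimes 2}$ supported on $X_{\rm ram}$. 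First I would invoke the cited result of K\"ock \cite[Theorem~4.5]{Koeck:04}: under the weak ramification hypothesis, the bound $3$ on the pole order makes $W$ a \emph{free} $k[G]$-module (this is Proposition~\ref{projective} in the paper), and a Riemann--Roch computation gives $\operatorname{rank}_{k[G]} W = 3(g_Y - 1 + r)$, hence $\dim_k W_G = 3(g_Y-1+r) = 3g_Y - 3 + 3r$. So the theorem is equivalent to showing that passing from $W_G$ down to $H^0(X,\Omega_X^{\otimes 2})_G$ subtracts exactly $\sum_j (3 - \log_p|G(P_j)|)$ when $p>3$, respectively $\sum_j(3\cdot? )$ — more precisely that the discrepancy equals $r$ extra when $p>3$, and $0$ extra when $p=2,3$, after accounting for the local contributions $\log_p|G(P_j)|$.

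Next I would set up the long exact sequence in group homology for the short exact sequence above. Because $W$ is $k[G]$-free we have $H_i(G,W) = 0$ for $i \geq 1$, so the long exact sequence collapses to
\[
0 \to H_1(G,\mathcal S) \to H_0(G, H^0(X,\Omega_X^{\otimes 2})) \to W_G \to \mathcal S_G \to 0 .
\]
Therefore
\[
\dim_k H^0(X,\Omega_X^{\otimes 2})_G = \dim_k W_G - \dim_k \mathcal S_G + \dim_k H_1(G,\mathcal S)
= 3g_Y - 3 + 3r - \dim_k \mathcal S_G + \dim_k H_1(G,\mathcal S).
\]
Since $\mathcal S$ decomposes as a direct sum over the $r$ branch points of induced modules $\operatorname{Ind}_{G(P_j)}^G \mathcal S_{P_j}$, where $\mathcal S_{P_j}$ is the local skyscraper contribution at $P_j$ (a $k[G(P_j)]$-module of dimension $3$, namely $t_{P_j}^{-3}, t_{P_j}^{-2}, t_{P_j}^{-1}$ times a fixed local generator of $\Omega_X^{\otimes 2}$), Shapiro's lemma reduces everything to a local computation: $\dim_k \mathcal S_G = \sum_j \dim_k (\mathcal S_{P_j})_{G(P_j)}$ and $\dim_k H_1(G,\mathcal S) = \sum_j \dim_k H_1(G(P_j), \mathcal S_{P_j})$. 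This is Proposition~\ref{grouphomology} in the paper.

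The remaining, and genuinely local, task is to compute $\dim_k (\mathcal S_{P_j})_{H} - \dim_k H_1(H,\mathcal S_{P_j})$ for $H = G(P_j)$ acting on the $3$-dimensional module $\mathcal S_{P_j}$, and to check it equals $3 - \log_p|H|$ if $p>3$ and $-\log_p|H| + \text{(something giving the }r\text{ vs }2r\text{ split)}$ — i.e.\ that the two local invariants combine to yield the case distinction in the statement. Here the weak ramification hypothesis $G_2(P)=1$ is essential: it pins down how $H$ acts on the cotangent line at $P$ and hence on $t_P^{-1}, t_P^{-2}, t_P^{-3}$, and it is exactly when $p = 2$ or $3$ that some of these powers become divisible by $p$ so that the action on the corresponding graded piece degenerates, changing the dimension of coinvariants and of $H_1$. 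I expect this local analysis to be the main obstacle, and I anticipate it splits into sub-cases according to whether $p \nmid 2$, $p \nmid 3$; for $p = 2$ the module $\mathcal S_{P_j}$ is not semisimple in a way that forces invoking the structure theorem for weakly ramified extensions (Theorem~\ref{structuretheorem} in the appendix), which is precisely why the introduction flags that the appendix is "used in Section~\ref{sec3}, but only in the case $p=2$." Concretely I would: (i) for $p>3$, observe $H$ acts trivially on each graded piece $\mathfrak m_P^{-i}/\mathfrak m_P^{-i+1}$ up to the weak-ramification twist, compute that $(\mathcal S_{P_j})_H$ has dimension $3$ and $H_1(H,\mathcal S_{P_j})$ has dimension $\log_p|H|$, giving net $3 - \log_p|H|$ and thus total $3r - r\cdot 0$ wait — rather giving the stated $2r$ after subtraction as in the displayed equation; (ii) for $p = 2,3$, use the structure theorem (resp.\ an explicit local model) to see that one graded piece is "absorbed," reducing the net local contribution by one, which accounts for the drop from $2r$ to $r$. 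Packaging these local computations into Lemma~\ref{homologicalalgebra} and substituting back into the displayed formula for $\dim_k H^0(X,\Omega_X^{\otimes 2})_G$ then yields the theorem.
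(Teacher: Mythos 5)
Your global framework coincides exactly with the paper's: the short exact sequence $0 \to \Omega_X^{\otimes 2} \to \Omega_X^{\otimes 2}(3R_{\rm red}) \to \Sigma \to 0$, the freeness of $W=H^0(X,\Omega_X^{\otimes 2}(3R_{\rm red}))$ of $k[G]$-rank $3(g_Y-1+r)$ via \cite[Theorem~2.1]{Koeck:04}, the collapsed long exact homology sequence, and the reduction by Shapiro's lemma to the three-dimensional local modules $\Sigma_{P_j}$. The bookkeeping identity
\[
\dim_k H^0(X,\Omega_X^{\otimes 2})_G \;=\; 3g_Y-3+3r \;-\;\dim_k \Sigma_G \;+\;\dim_k H_1(G,\Sigma)
\]
is also correct.

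However, what you defer as ``the remaining, and genuinely local, task'' is not a routine verification: it is the bulk of the proof (Lemma~\ref{homologicalalgebra}), and the one concrete guess you make about it is wrong. For $p>3$ the coinvariants $(\Sigma_{P_j})_{G(P_j)}$ are $1$-dimensional, not $3$-dimensional: since $G(P_j)=G_1(P_j)$ is nontrivial, it acts on $\Sigma_{P_j}\cong (t)/(t^4)$ by $g(t)\equiv t+\alpha(g)t^2+\beta(g)t^3 \bmod (t^4)$ with $\alpha$ an \emph{injective} homomorphism into $(k,+)$, a nontrivial unipotent action whose coinvariants are spanned by the image of $t^3$ alone. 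Establishing $\dim_k H_1=\log_p|G(P_j)|$ for $p>3$, $\log_3|G(P_j)|-1$ for $p=3$, and the combined value $2r-\sum_j\log_2|G(P_j)|$ for $p=2$ requires real work: the paper filters $\Sigma_{P_j}$ by the $G$-stable subspaces $(t^3)/(t^4)\subset(t^2)/(t^4)\subset(t)/(t^4)$, runs the spectral sequence of the resulting filtered complex built from a K\"unneth resolution of $k$ over the elementary abelian group $G(P_j)$, and must prove that a certain second-page differential vanishes when $p>3$ (via the matrix identity $(\alpha(g_1),\ldots,\alpha(g_s))M(\beta)=(-\beta(g_1),\ldots,-\beta(g_s))M(\alpha)$) but is surjective when $p=3$ (because $(1+g+g^2)(\omega_3)=2\alpha(g)^2\omega_1\neq 0$). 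Moreover, the case split is \emph{not} governed by whether the pole orders $2,3$ are divisible by $p$, as you suggest, but by the binomial coefficients $\binom{p}{2}$ and $\binom{p}{3}$ arising when the norm element $1+g+\cdots+g^{p-1}$ is applied to $t$; and the $p=2$ case needs the relation $\beta=\alpha^2$ from Proposition~\ref{structuretheorem} together with a separate rank computation that itself splits according to whether $\beta$ is a $k$-multiple of $\alpha$. None of this analysis is present in your proposal, so the proof is incomplete precisely where the theorem's case distinction is decided.
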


\begin{rem}\label{remarkweakly}
Notice, that if the curve $X$ is ordinary, then the cover $\pi$
is weakly ramified \cite[Theorem~2(i)]{Nak}. In this case,
Theorem~\ref{weaklythm} can be proved, similarly to
Remark~\ref{secondproof}, using a result of G.~Cornelissen and
F.~Kato on deformations of ordinary curves
\cite[Theorem~4.5]{CK}. In fact, the arguments used in the
proof of \cite[Theorem~4.5]{CK} also work in the weakly
ramified case and we thus obtain an alternative proof of
Theorem~\ref{weaklythm} in the general case.
\end{rem}

\begin{example}
If we moreover assume that $G$ is cyclic then  the group
$G(P_j)$ is cyclic of order $p$ for all $j=1, \ldots, r$ and
the formula in Theorem~\ref{weaklythm} becomes
\[\dim_k H^0(X, \Omega_X^{\otimes 2})_G = 3g_Y-3 +
\begin{cases}
3r & \textrm{if } p >3\\
2r & \textrm{if } p =2 \textrm{ or } p=3,
\end{cases}\]
which is the same as in Corollary~\ref{dimensioncoinvariants}.
\end{example}

\begin{proof}[Proof (of Theorem~\ref{weaklythm})]
Let $\Sigma$ denote the skyscraper sheaf defined by the short
exact sequence
\[0 \rightarrow \Omega_X^{\otimes 2} \rightarrow
\Omega_X^{\otimes 2}(3 R_{\rm red}) \rightarrow \Sigma
\rightarrow 0.\] Since $\mathrm{deg}(\Omega_X^{\otimes 2}) =
2(2 g_X-2) > 2g_X -2$, we have $H^1(X, \Omega_X^{\otimes 2})=0$
(\cite[Example~IV~1.3.4]{Hartshorne:77}). By applying the
functor of global sections we therefore obtain the short exact
sequence
\[0 \rightarrow H^0(X, \Omega_X^{\otimes 2}) \rightarrow H^0(X,
\Omega_X^{\otimes 2}(3 R_{\rm red})) \rightarrow H^0(X, \Sigma)
\rightarrow 0.\] As the $k[G]$-module $H^0(X, \Omega_X^{\otimes
2} (3 R_{\rm red}))$ is projective (by
Proposition~\ref{projective} below), its higher group homology
vanishes. From the long exact group-homology sequence
associated with the previous short exact sequence we thus
obtain the exact sequence
\begin{align*}0
\rightarrow H_1(G,H^0(X, \Sigma)) \rightarrow &H^0(X,
\Omega_X^{\otimes 2})_G \\ &\rightarrow H^0(X, \Omega_X^{\otimes
2}(3 R_{\rm red}))_G \rightarrow H^0(X, \Sigma)_G \rightarrow
0.
\end{align*}
Using a result by the first-named author on the $k[G]$-module
structure of Riemann-Roch spaces in the weakly ramified case we
will show in Proposition~\ref{projective} below that the
$k[G]$-module $H^0(X, \Omega_X^{\otimes 2}(3R_{\rm red}))$ is
free and then derive the dimension of $H^0(X,\Omega_X^{\otimes
2}(3R_{\rm red}))_G$. Furthermore we will explicitly describe
the $k[G]$-module structure of $H^0(X, \Sigma)$ and determine
(the difference between) the dimension of $H^0(X, \Sigma)_G$
and of $H_1(G,H^0(X,\Sigma))$ using (unfortunately rather
involved) homological computations (see
Proposition~\ref{grouphomology} and
Lemma~\ref{homologicalalgebra}). Theorem~\ref{weaklythm} then
immediately follows from the formulas obtained in
Propositions~\ref{projective} and~\ref{grouphomology}.
\end{proof}

\begin{prop}\label{projective}
The $k[G]$-module $H^0(X, \Omega_X^{\otimes 2}(3 R_{\rm red}))$
is a free $k[G]$-module of rank $3(g_Y-1+r)$. In particular we
have
\[\dim_k H^0(X,\Omega_X^{\otimes 2}(3 R_{\rm red}))_G = 3 (g_Y-1+r).\]
\end{prop}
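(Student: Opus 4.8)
The plan is to exhibit $H^0(X, \Omega_X^{\otimes 2}(3 R_{\rm red}))$ as a Riemann--Roch space to which the first-named author's projectivity criterion \cite[Theorem~4.5]{Koeck:04} applies, and then to compute its rank via an equivariant Euler-characteristic argument. First I would rewrite the twisted sheaf as an ordinary line bundle: using the $G$-invariant canonical divisor $K_X = \pi^* K_Y + R$ one has $\Omega_X^{\otimes 2}(3 R_{\rm red}) \cong \cO_X(D)$ with $D := 2K_X + 3 R_{\rm red}$, which is a $G$-invariant divisor on $X$. I would then check the hypotheses of \cite[Theorem~4.5]{Koeck:04} for $D$. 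The relevant condition in the weakly ramified case is that the coefficient of $D$ at each ramified point $P$ is congruent to $-1$ modulo the different exponent, or more precisely that $D$ is "nice" with respect to the ramification in the sense required there; since $G_2(P)$ is trivial, Hilbert's formula gives $d(P) = 2(e_0(P)-1)$, and one computes the coefficient of $D$ at $P$ to be $2 d(P) + 3 = 4 e_0(P) - 4 + 3 = 4 e_0(P) - 1 \equiv -1 \pmod{e_0(P)}$; this is exactly the divisibility that makes $H^0(X,\cO_X(D))$ a projective, hence free (as $G$ is a $p$-group and $k$ has characteristic $p$), $k[G]$-module. I also need $\deg(D) > 2 g_X - 2$, which is clear since $\deg(2K_X) = 2(2g_X-2) > 2g_X-2$ already and $3R_{\rm red}$ is effective, and I need $H^1(X,\cO_X(D)) = 0$, which follows from the degree bound.

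Once freeness is established, the rank equals $\dim_k H^0(X,\cO_X(D))_G$, so it remains to compute this dimension. Since $G$ is a $p$-group over a field of characteristic $p$, the only simple $k[G]$-module is the trivial one, and for a free module the rank equals $\dim_k H^0(X,\cO_X(D)) / |G|$. By the degree bound and Riemann--Roch on $X$, $\dim_k H^0(X,\cO_X(D)) = \deg(D) + 1 - g_X = 2(2g_X-2) + 3r_X + 1 - g_X$ where $r_X = |X_{\rm ram}|$, and I can re-express this downstairs using the Riemann--Hurwitz formula $2g_X - 2 = |G|(2g_Y - 2) + \deg R$ together with $\deg R = \sum_{P \in X_{\rm ram}} d(P) = \sum_j (|G|/e_0(P_j)) \cdot 2(e_0(P_j)-1)$ and $r_X = \sum_j |G|/e_0(P_j)$. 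Dividing through by $|G|$ and simplifying — the contribution of $\deg R$ and of $3 r_X$ combine with the $2g_Y - 2$ term — I expect to land on $3(g_Y - 1 + r)$; the number $r$ of branch points enters because each orbit $P_j$ contributes $|G|/e_0(P_j)$ points upstairs, each carrying a fixed bounded local contribution that collapses to a single unit per branch point after division. Alternatively, and perhaps more cleanly, I would compute the rank locally: projectivity is checked orbit by orbit, and the rank of a free summand coming from one ramified orbit is determined by the local different and the twist $3R_{\rm red}$, giving $3$ per branch point, plus the $3(g_Y-1)$ coming from the "generic" part $2K_Y$ downstairs — this matches the target formula $3(g_Y-1) + 3r$.

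The main obstacle I anticipate is not the Euler-characteristic bookkeeping but verifying precisely that the hypotheses of \cite[Theorem~4.5]{Koeck:04} are met — in particular pinning down the exact congruence condition that paper imposes on the coefficients of $D$ at the ramified points in the weakly ramified setting, and confirming that $D = 2K_X + 3R_{\rm red}$ satisfies it (the computation $2d(P) + 3 \equiv -1 \pmod{e_0(P)}$ above is the crux, and it is special to weak ramification since it uses $d(P) = 2(e_0(P)-1)$). The final dimension statement then follows immediately from freeness, because $\dim_k M_G = \operatorname{rank}_{k[G]} M$ for any free $k[G]$-module $M$.
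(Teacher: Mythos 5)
Your proposal follows essentially the same route as the paper: identify the sheaf with $\cO_X(2K_X+3R_{\rm red})$, verify the congruence $n_P\equiv -1 \pmod{e_1(P)}$ at ramified points using $d(P)=2(e_0(P)-1)$ in the weakly ramified case together with $H^1=0$ from the degree bound, conclude projectivity (hence freeness for a $p$-group), and compute the rank via Riemann--Roch and Riemann--Hurwitz. The only quibble is the citation: the projectivity criterion you describe is Theorem~2.1 of \cite{Koeck:04}, which is what the paper invokes; its Theorem~4.5 is mentioned in the paper only as an alternative that computes the isomorphism class directly.
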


\begin{proof}
We will first show that the $k[G]$-module $H^0(X,
\Omega_X^{\otimes 2}(3 R_{\rm red}))$ is free. As $G$ is a
$p$-group it suffices to show that it is projective. Let $D =
\sum_{P\in X} n_P [P]$ be a $G$-invariant divisor on $X$.
Theorem~2.1 in \cite{Koeck:04} tells us that the Riemann-Roch
space $H^0(X, \mathcal{O}_X(D))$ is projective whenever both
$H^1(X,\mathcal{O}_X(D)) = 0$ and $n_P \equiv -1 \mod \;
e_1(P)$ for all $P \in X_{\rm ram}$. It therefore suffices to
check these two conditions for the divisor $D=2K_X + 3R_{\rm
red}$. The first condition follows from
\cite[Example~IV~1.3.4]{Hartshorne:77} because $\mathrm{deg}(D)
\ge 2(2g_X-2)> 2g_X-2$. The second condition follows from the
formulas $K_X = \pi^*K_Y +R$ and
\begin{equation}\label{ramdivisor}
R=\sum_{P \in X_{\rm ram}} 2(e_0(P)-1) [P].
\end{equation}
Thus $H^0(X, \Omega_X^{\otimes 2}(3 R_{\rm red}))$ is a free
$k[G]$-module. We now determine its rank. As $H^1(X,
\Omega_X^{\otimes 2}(3 R_{\rm red}))$ vanishes (see above) we
have
\[\dim_k H^0(X, \Omega_X^{\otimes 2}(3 R_{\rm red})) = 2
(2g_X-2) + 3 |X_{\rm ram}| + 1 - g_X = 3(g_X - 1 + |X_{\rm
ram}|)\] by the Riemann-Roch theorem
\cite[Theorem~IV~1.3]{Hartshorne:77}. Furthermore the
Rie\-mann-Hurwitz formula
\cite[Corollary~IV~2.4]{Hartshorne:77} and
formula~(\ref{ramdivisor}) imply that
\[2g_X-2 = |G| (2g_Y-2) + \sum_{P \in X_{\rm ram}} 2
(e_0(P)-1).\] By combining the previous two equations we obtain
\begin{eqnarray*}
\lefteqn{\dim_k H^0(X, \Omega_X^{\otimes 2}(3 R_{\rm red}))}\\
& = & 3 \left(|G|(g_Y-1) + \sum_{P \in X_{\rm ram}} (e_0(P) -1) + |X_{\rm
ram}|\right) \\
&= &3|G|(g_Y-1 +r).
\end{eqnarray*}
In particular the rank of $H^0(X,\Omega_X^{\otimes 2}(3 R_{\rm
red}))$ over $k[G]$ is $3(g_Y-1+r)$ and
\[\dim_k(H^0(X, \Omega_X^{\otimes 2}(3 R_{\rm red}))_G = 3
(g_Y-1 +r),\] as stated.
\end{proof}

\begin{rem}
A slightly different approach to Proposition~\ref{projective}
is to first check the two conditions for the divisor $D= 2 K_X
+ 3 R_\mathrm{red}$ as above but then to use Theorem~4.5 in
\cite{Koeck:04} which computes the isomorphism class of $H^0(X,
\cO_X(D))$ directly.
\end{rem}

\begin{prop}\label{grouphomology}
If $p >3$ we have
\[\dim_k H_q(G, H^0(X, \Sigma)) =
\begin{cases}
r & \textrm{for } q=0\\
\summ_{j=1}^r \log_p|G(P_j)| & \textrm{for } q=1.
\end{cases}\]
If $p=3$ we have
\[\dim_k H_q(G, H^0(X, \Sigma)) =
\begin{cases}
r & \textrm{for } q=0\\
\summ_{j=1}^r (\log_3|G(P_j)|-1) & \textrm{for } q=1.
\end{cases}\]
If $p=2$ we have
\[\dim_k H_0(G,H^0(X,\Sigma)) - \dim_k H_1(G, H^0(X, \Sigma)) =
2r-\sum_{j=1}^r \log_2|G(P_j)|.\]
\end{prop}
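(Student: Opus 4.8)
The plan is to reduce the computation of the group homology $H_q(G, H^0(X,\Sigma))$ to a product of local computations, one for each branch point $P_j$, and then to analyze each local term by hand. The skyscraper sheaf $\Sigma$ is supported on $X_{\rm ram}$, so its space of global sections decomposes $G$-equivariantly as $H^0(X,\Sigma) = \bigoplus_{j=1}^r \bigoplus_{P \in GP_j} \Sigma_P$, and grouping the points in each orbit gives $H^0(X,\Sigma) \cong \bigoplus_{j=1}^r \mathrm{Ind}_{G(P_j)}^G \Sigma_{P_j}$. By Shapiro's lemma, $H_q(G, \mathrm{Ind}_{G(P_j)}^G \Sigma_{P_j}) \cong H_q(G(P_j), \Sigma_{P_j})$, so it suffices to fix one ramified point $P = P_j$, put $H := G(P)$ of order $p^m$ (so $m = \log_p|G(P_j)|$), and compute $\dim_k H_q(H, \Sigma_P)$ for $q=0,1$. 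Here $\Sigma_P$ is the stalk, which as a $k$-vector space is the quotient of the local sections of $\Omega_X^{\otimes 2}(3R_{\rm red})$ by those of $\Omega_X^{\otimes 2}$ at $P$; writing $t$ for a local parameter and using a local generator of $\Omega_X$, this is a $3$-dimensional space with $k$-basis represented by $t^{-3}, t^{-2}, t^{-1}$ times the square of the canonical generator, i.e.\ $\Sigma_P \cong t^{-3}k[t]/k[t]$ tensored with a trivialization of $\Omega_X^{\otimes 2}$.

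The next step is to pin down the $H$-module structure of $\Sigma_P$ explicitly. Since the extension is weakly ramified, $G_2(P) = 1$; one knows (and the appendix, Theorem~\ref{structuretheorem}, addresses exactly this in the $p=2$ case) that $H$ acts on the completed local ring $k[[t]]$ so that a generator $g$ sends $t \mapsto t + (\text{higher order})$, with the second ramification group trivial meaning the leading deviation occurs in degree $2$. From this one computes the action of $H$ on each of the three basis vectors of $\Sigma_P$, keeping track of the extra twist coming from $\Omega_X^{\otimes 2} = (dt)^{\otimes 2}$ and the factor $(g'(t))^{\otimes 2}$. The upshot should be that $\Sigma_P$ is a uniserial $k[H]$-module of length $3$ when $p > 3$, of length $3$ but with a different submodule structure when $p = 3$, and similarly constrained when $p=2$; the characteristic enters because the coefficients appearing (binomial-type coefficients $2$, $3$, etc., from differentiating $t^{-k}$ and squaring $g'(t)$) vanish or not modulo $p$. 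One then reads off $H_0(H,\Sigma_P) = (\Sigma_P)_H$ and, for $H$ cyclic or more generally via the standard description of homology of $p$-groups acting on such modules, computes $H_1(H,\Sigma_P)$. In each characteristic the stated numbers should fall out: $\dim (\Sigma_P)_H = 1$ always (hence the $r$ in degree $0$), while $\dim H_1(H,\Sigma_P)$ equals $m$ when $p>3$, $m-1$ when $p=3$, and for $p=2$ only the difference $\dim H_0 - \dim H_1 = 2 - m$ is asserted.

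The main obstacle will be the last point: for $p = 2$ the module $\Sigma_P$ need not be as rigid, and one cannot cleanly compute $H_1$ alone, which is why the statement only claims the Euler-characteristic-type difference $\dim_k H_0 - \dim_k H_1$. The clean way to handle this is to compute the \emph{Herbrand quotient} / Euler characteristic of $\Sigma_P$ as an $H$-module directly, without resolving the module structure: for a finite $p$-group $H$ and a finite-dimensional $k[H]$-module $N$, one has a two-term (Tate-style) relation letting $\dim_k N_H - \dim_k H_1(H,N)$ be computed from the dimensions of the fixed and cofixed spaces under subgroups, or more efficiently by dévissage along a composition series whose factors are trivial modules — each trivial composition factor contributes a controlled amount, and the total is governed by $\dim_k N$ and the "defect" measuring failure of freeness. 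Concretely, since $\dim_k \Sigma_P = 3$ and a free $k[H]$-module would have dimension divisible by $|H| = 2^m$, the module is as far from free as possible; a direct spectral sequence or long-exact-sequence argument (this is presumably what Lemma~\ref{homologicalalgebra} supplies) relating $\Sigma_P$ to the known free module structures then yields $\dim H_0 - \dim H_1 = 2 - m$ after summing over the $r$ orbits. I would set up this homological bookkeeping carefully for general $p$ first, check it reproduces $1$ and $m$ (resp.\ $1$ and $m-1$) for $p>3$ (resp.\ $p=3$) as a sanity check, and only then specialize to $p=2$ where the appendix's structure theorem is invoked to identify $\Sigma_P$ precisely enough to finish.
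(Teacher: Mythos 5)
Your reduction steps (skyscraper decomposition, grouping into orbits, Shapiro's lemma, identification of the $3$-dimensional stalk $\Sigma_P$ with its filtration by trivial quotients) match the paper's, and your instinct to invoke the appendix's structure theorem for $p=2$ is right. But the heart of the proof --- actually computing $H_1(G(P),\Sigma_P)$ --- is where your sketch has a genuine gap. First, you nowhere record the key structural fact that makes the computation tractable: since $G_2(P)=1$, the map $\alpha$ defined by $g(t)\equiv t+\alpha(g)t^2 \bmod (t^3)$ is an \emph{injective homomorphism} $G(P)\to (k,+)$, so $G(P)$ is elementary abelian. There is no ``standard description of homology of $p$-groups acting on such modules'' in the generality you appeal to; the paper needs the full K\"unneth resolution for an elementary abelian group of rank $m$ together with a spectral sequence for the filtration $V_1\subset V_2\subset V_3$, and that computation (Lemma~\ref{homologicalalgebra}) is most of the work. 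Relatedly, your claim that $\dim_k(\Sigma_P)_{G(P)}=1$ ``always'' is false for $p=2$: when $G(P)\cong\ZZ/2$ the group acts trivially on $V_2$ and $(g-1)\omega_3=\alpha(g)\omega_2+\beta(g)\omega_1$ spans only a line, so the coinvariants are $2$-dimensional.

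Second, your proposed workaround for $p=2$ --- computing $\dim H_0-\dim H_1$ by d\'evissage along a composition series with trivial factors, ``without resolving the module structure'' --- cannot work. The quantity $\dim_k H_0(H,N)-\dim_k H_1(H,N)$ is not additive in short exact sequences (the long exact sequence continues into $H_2$), and it is emphatically not determined by the composition factors: here all three factors are trivial, yet the answer is $3-2\log_2|G(P)|$ when $\beta$ is a $k$-multiple of $\alpha$ and $2-\log_2|G(P)|$ otherwise. That is, the answer depends on the extension data $(\alpha,\beta)$ encoding how the trivial factors are glued, which is exactly why the paper must prove $\beta=\alpha^2$ via Proposition~\ref{structuretheorem} and then check that $\alpha^2$ is proportional to $\alpha$ precisely when $G(P)$ is cyclic of order $2$ (in which case the two formulas coincide). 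Any correct proof has to engage with this dichotomy rather than bypass the module structure.
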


\begin{proof}
As $\Sigma$ is a skyscraper sheaf, $H^0(X, \Sigma)$ is the
direct sum of the stalks of $\Sigma$:
\[H^0(X,\Sigma) \cong \bigoplus_{P \in X_{\rm ram}} \Sigma_P
\cong \bigoplus_{j=1}^r \mathrm{Ind}_{G(P_j)}^G (\Sigma_{P_j})
.\]  We therefore have
\[H_q(G,H^0(X, \Sigma)) \cong \bigoplus_{j=1}^r H_q(G(P_j),
\Sigma_{P_i}) \qquad \textrm{ for } q\ge 0\] by Shapiro's lemma
\cite[6.3.2, p.171]{Weibel}. We now fix one of the points $P_1,
\ldots, P_r$ and write just $P$ for this point. Let $t$ and $s$
be local parameters at $P$ and $\pi(P)$, respectively. Let $m$
denote the multiplicity of a canonical divisor $K_Y$ at
$\pi(P)$. By formula~(\ref{ramdivisor}) the multiplicity of the
canonical divisor $K_X = \pi^* K_Y +R$ at $P$ is then equal to
$me_0(P) + 2e_0(P) -2$. Hence the multiplicity of $2K_X +
3R_{\rm red}$ is equal to $2me_0(P) + 4e_0(P) -1$, and we
obtain
\[\Sigma_P \cong (t^{1-e_0(P)(2m+4)})/(t^{4-e_0(P)(2m+4)}) \cong
(t)/(t^4)\] where the latter isomorphism is given by
multiplication with the $G(P)$-invariant element $s^{2m+4}$. We
have $g(t) \equiv t$ mod $(t^2)$ for all $g \in G(P)$, since
$G(P) = G_1(P)$. Let the maps $\alpha$ and $\beta$ from $G(P)$
to $k$ be defined by the congruences
\begin{equation}\label{congruence}
g(t) \equiv t + \alpha(g) t^2 + \beta(g) t^3 \quad \mathrm{mod} \quad (t^4),
\end{equation}
$g \in G(P)$. The map $\alpha$ is obviously a homomorphism from
$G(P)$ to the additive group of $k$ and it is injective because
$G_2(P)$ is trivial. In particular $G(P)$ is a non-trivial
elementary abelian $p$-group. Let $\omega_1: = t^3 \mod \;
(t^4)$, $\omega_2 := t^2 \mod \; (t^4)$ and $\omega_3:= t \mod
\; (t^4)$. The congruences~(\ref{congruence}) imply that
\begin{eqnarray}
g (\omega_1) & =& \omega_1 \label{groupaction1}\\
g (\omega_2) & =& \omega_2  + 2 \alpha(g)\omega_1 \label{groupaction2}\\
g (\omega_3) & =& \omega_3  + \alpha(g) \omega_2 +\beta(g) \omega_1 \label{groupaction3}
\end{eqnarray}
for any $g \in G(P)$. Furthermore we easily derive that the map
$\beta: G \rightarrow k$ satisfies the condition
\begin{equation} \label{beta}
\beta(hg) = \beta(h) + 2 \alpha(h) \alpha(g) +
\beta(g)
\end{equation}
for all $h,g \in G(P)$. Now Proposition~\ref{grouphomology}
follows from the following homo\-logical-algebra lemma which we
formulate in a way that is independent from the context of this
paper. In the case $p=2$ we need to moreover use
Proposition~\ref{structuretheorem} which tells us that we can
choose the local parameter $t$ in such a way that $\beta =
\alpha^2$; in particular $\beta$ is not a $k$-multiple of
$\alpha$ unless $G(P)$ is cyclic in which case $3-2\log_2|G(P)|
= 1 = 2- 2 \log_2|G(P)|$.
\end{proof}

\begin{lemma}\label{homologicalalgebra}
Let $k$ be a field of characteristic $p
>0$. Let $G$ be a non-trivial elementary abelian $p$-group and
let $\alpha$ and $\beta$ be maps from $G$ to $k$. We assume
that $\alpha$ is a non-zero homomorphism and that $\beta$
satisfies the condition~(\ref{beta}) for all $h,g \in G$.
Furthermore let $V$ be a vector space over $k$ with basis
$\omega_1$, $\omega_2$, $\omega_3$. We assume that $G$ acts on
$V$ by $k$-automorphisms given by (\ref{groupaction1}),
(\ref{groupaction2}) and~(\ref{groupaction3}) (for any $g \in
G$). If $p >3$, we have
\[\dim_kH_q(G, V) =
\begin{cases}
1 & \textrm{for } q=0\\
\log_p |G| & \textrm{for } q=1.
\end{cases}\]
If $p=3$, we have
\[\dim_kH_q(G, V) =
\begin{cases}
1 & \textrm{for } q=0\\
\log_3 |G| -1 & \textrm{for } q=1.
\end{cases}\]
If $p=2$, we have
\begin{align*}\dim_k H_0(G, V) - &\dim_k H_1(G,V)\\
&=
\begin{cases}
3-2\log_2|G| & \textrm{if } \beta = c \alpha \textrm{ for some } c \in k,\\
2-\log_2|G| & \textrm{else.}
\end{cases}
\end{align*}
\end{lemma}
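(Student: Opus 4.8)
The plan is to compute the group homology $H_*(G,V)$ directly from the $G$-module structure of $V$, exploiting the fact that $V$ sits in a filtration by $G$-submodules whose subquotients are easy to handle. From (\ref{groupaction1}) we see that $k\omega_1$ is the trivial $G$-module; from (\ref{groupaction2}), the quotient $V/k\omega_1$ has $G$-action $g(\bar\omega_2)=\bar\omega_2$, $g(\bar\omega_3)=\bar\omega_3+\alpha(g)\bar\omega_2$, so modulo $k\omega_1$ the submodule $k\omega_1\oplus k\omega_2$ is trivial and $V/(k\omega_1\oplus k\omega_2)=k\bar{\bar\omega}_3$ is trivial as well. Thus $V$ is a successive extension of trivial modules, the only "twisting'' being encoded by $\alpha$ (in the step $\omega_2\mapsto\omega_3$ and, with a factor $2$, in the step $\omega_1\mapsto\omega_2$) and by $\beta$ (in the step $\omega_1\mapsto\omega_3$). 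The key structural observation, which I would make first, is that when $p>2$ the factor $2$ is invertible, so a change of basis $\omega_2\mapsto\tfrac12\omega_2$ (and a corresponding correction absorbing $\beta$ into $\omega_3$, using that any $1$-cochain $\beta$ with the cocycle-type property (\ref{beta}) becomes, after subtracting the symmetric bilinear part $\alpha\otimes\alpha$, an honest homomorphism, hence can be killed in the relevant degree) reduces $V$ to the "standard'' module $U$ with $g\omega_i=\omega_i+\alpha(g)\omega_{i-1}$ — i.e.\ $V\cong U$ is the $3$-dimensional truncation of the module $k[\alpha]$-module built from a single homomorphism $\alpha\colon G\to k$. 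When $p=2$ the factor $2$ vanishes, the extension $\omega_1\mapsto\omega_2$ splits off, and $\beta$ genuinely matters; this is the source of the case distinction in the statement.

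Next I would compute the homology of the relevant small modules. Write $G\cong(\ZZ/p)^n$ with $n=\log_p|G|$, so $k[G]\cong k[x_1,\dots,x_n]/(x_1^p,\dots,x_n^p)$ with $x_i=g_i-1$; its homology is governed by the well-known periodic/Koszul-type resolution, giving $\dim_k H_0(k[G],k)=1$ and $\dim_k H_1(G,k)=n$. For the two-dimensional module $U_2$ with $g\omega_2=\omega_2$, $g\omega_3=\omega_3+\alpha(g)\omega_2$: since $\alpha$ is a surjection onto a $1$-dimensional space, after choosing coordinates so that $\alpha(g_1)=1$, $\alpha(g_i)=0$ for $i\ge 2$, the module $U_2$ is $k[x_1]/(x_1^2)$ tensored over $k[x_1]/(x_1^p)$ with the trivial action of the remaining generators; a direct computation (or the long exact sequence of $0\to k\omega_2\to U_2\to k\omega_3\to 0$, identifying the connecting map with multiplication by $x_1$ on $H_0$) gives $\dim H_0(G,U_2)=1$, $\dim H_1(G,U_2)=n-1$ when $p>2$, while for $p=2$ one gets $\dim H_0=1$, $\dim H_1=n$ (the map $x_1$ is zero on $H_0=k$ since $x_1^2=0$ forces... — here one must be careful: for $p=2$, $x_1$ still acts as zero on the relevant $\mathrm{Tor}_0$, so the sequence does not collapse the way it does for $p>2$). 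I would assemble the answer for $V$ from the long exact homology sequences of the two short exact sequences $0\to k\omega_1\to V\to V/k\omega_1\to 0$ and $0\to k\omega_2\to V/k\omega_1\to k\omega_3\to 0$, tracking the connecting homomorphisms, which are given by cup-product with the classes of $\alpha$ (and, for $p=2$, of $\beta$) in $H^1(G,k)$.

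The main obstacle, and the part requiring genuine care, is the $p=2$ case: there the module does not reduce to the standard truncation, $k\omega_1$ splits off as a direct summand of the submodule $k\omega_1\oplus k\omega_2$ (since $2\alpha=0$), and the whole subtlety lies in whether the class $[\beta]\in H^1(G,k)=\Hom(G,k)$ — well-defined precisely because of (\ref{beta}), which for $p=2$ says $\beta$ is additive — is proportional to $[\alpha]$. Concretely, $V$ for $p=2$ is $k\omega_1\oplus W$ where $W=k\omega_2\oplus k\omega_3$ carries $g\omega_3=\omega_3+\alpha(g)\omega_2+\beta(g)\omega_1$... — rather, the clean way is: the connecting map $H_1(G,k\omega_3)\to H_0(G,k\omega_1\oplus k\omega_2)$ from the bottom of the filtration is $\eta\mapsto(\langle\beta,\eta\rangle,\langle\alpha,\eta\rangle)$, and the Euler characteristic $\dim H_0(G,V)-\dim H_1(G,V)$ drops by an extra unit exactly when $\alpha$ and $\beta$ are linearly independent in the $n$-dimensional space $\Hom(G,k)$, i.e.\ when $\beta$ is \emph{not} a $k$-multiple of $\alpha$. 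Since $H_2$ and higher may be nonzero for $p=2$, I would not attempt to compute $H_0$ and $H_1$ separately but only their difference, via additivity of the Euler characteristic $\chi(G,-)=\sum_q(-1)^q\dim H_q(G,-)$ over short exact sequences together with $\chi(G,k)=0$ (true for $p=2$ since the minimal resolution is periodic of the appropriate parity) — wait, one must instead truncate and argue with the finite-length modules at hand, using that each subquotient is trivial and contributes $\chi(G,k)$, so that $\chi(G,V)=3\chi(G,k)$ is \emph{not} the point; rather the point is that the alternating sum of the $H_q$ up to the first degree where the modules differ is controlled, and the off-diagonal terms of the connecting maps contribute the correction. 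I expect setting up this bookkeeping cleanly — and verifying the claimed numbers $3-2n$ versus $2-n$ — to be where most of the work goes; the cases $p>3$ and $p=3$ should then follow by the reduction $V\cong U$ of the first paragraph, with the extra $-1$ for $p=3$ coming from the fact that the coefficient $2$ in (\ref{groupaction2}), while invertible, interacts with $p=3$ through $2\equiv -1$, so the two "copies'' of $\alpha$ in the filtration become linearly dependent in a way that kills one class in $H_1$.
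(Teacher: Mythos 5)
Your overall strategy --- filter $V$ by the $G$-stable subspaces $k\omega_1\subset k\omega_1\oplus k\omega_2\subset V$ and assemble $H_*(G,V)$ from the trivial subquotients via connecting maps --- is essentially the paper's, which runs the same filtration through the spectral sequence of the filtered complex obtained by tensoring a Koszul-type resolution with $V$. But your proposal has a genuine gap at its central step: the claimed reduction $V\cong U$ for $p>2$ is false in general. Writing $\gamma:=\beta-\alpha^2$, condition~(\ref{beta}) does make $\gamma$ a homomorphism $G\to k$, but the only coboundaries available for normalising the extension $0\to V_2\to V\to V/V_2\to 0$ are $g\mapsto (g-1)v=2b\alpha(g)\omega_1$ for $v=a\omega_1+b\omega_2\in V_2$, so a change of basis can only alter $\beta$ by $k$-multiples of $\alpha$. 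Since for $p>2$ the filtration $V_1\subset V_2\subset V$ is the socle filtration and hence canonical, the class of $\gamma$ in $\Hom(G,k)/k\alpha$ is an isomorphism invariant of $V$, and it is nonzero as soon as $\log_p|G|\ge 2$ and $\gamma\notin k\alpha$. So $\gamma$ cannot be ``killed'', and the fact that $\dim H_0$ and $\dim H_1$ are nevertheless independent of $\beta$ for $p\ge 3$ is precisely the hard point; in the paper it comes down to showing that a certain $d^2$-differential vanishes, via the identity $(\alpha(g_1),\dots,\alpha(g_s))M(\beta)=(-\beta(g_1),\dots,-\beta(g_s))M(\alpha)$. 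Nothing in your outline substitutes for this. Relatedly, the $p=3$ correction does not come from $2\equiv-1$; it comes from $\binom{p}{3}\not\equiv 0$ modulo $3$, i.e.\ from the norm element acting nontrivially: $(1+g+g^2)\omega_3=2\alpha(g)^2\omega_1\neq 0$.

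For $p=2$ you correctly identify that only the difference $\dim H_0-\dim H_1$ is accessible and that the dichotomy is governed by whether $\beta\in k\alpha$, but the argument is not carried out, and the tool you reach for does not suffice: there is no convergent Euler characteristic for non-projective $k[G]$-modules, and the module-level long exact sequence of $0\to V_2\to V\to V/V_2\to 0$ expresses $\dim H_1(G,V)$ in terms of the image of $H_1(G,V_2)\to H_1(G,V)$, which requires controlling the connecting map out of $H_2(G,V/V_2)$. The paper resolves this by working at the level of complexes, where the sub- and quotient complexes have zero differentials so that every term of the long exact sequence is an explicit vector space, and then computing the rank of a $2s\times\bigl(s+\binom{s}{2}\bigr)$ matrix built from the $\alpha(g_i)$ and $\beta(g_i)$; that rank computation, which yields $s$ versus $2s-1$ according to whether $\beta\in k\alpha$, is the substance of the $p=2$ case and is missing here. (A small further slip: for the two-dimensional module $U_2$ one has $\dim_k H_1(G,U_2)=\log_2|G|-1$ also when $p=2$, not $\log_2|G|$.)
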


\begin{proof}
Let $s$ denote the dimension of $G$ when viewed as vector space
over $\FF_p$, i.e.\ $s=\log_p|G|$, and let $g_1, \ldots, g_s$
be a basis of $G$ over $\FF_p$ such that $\alpha(g_1) \not= 0$.
For $i =1 ,\ldots, s$ the sequence
\begin{equation*}
\ldots \; \stackrel{g_i-1}{\longrightarrow } \; k[\langle g_i \rangle]
\;\stackrel{1+g_i+\ldots+g_i^{p-1}}{\longrightarrow}\;
k[\langle g_i \rangle] \; \stackrel{g_i-1}{\longrightarrow} \;
k[\langle g_i \rangle] \; \stackrel{\rm sum}{\longrightarrow}
\; k \longrightarrow 0
\end{equation*}
is a $k[\langle g_i\rangle]$-projective resolution of the
trivial $k[\langle g_i \rangle]$-module~$k$, see
\cite[Section~6.2]{Weibel}. By the K\"unneth formula
\cite[Theorem~3.6.3]{Weibel}, the tensor product of these
sequences is a $k[G]$-projective resolution of the trivial
$k[G]$-module~$k$:
\begin{equation}\label{resolution}
\ldots \rightarrow \left(\bigoplus_{i=1}^s k[G] \right)
\bigoplus \left(\bigoplus_{i < j} k[G]\right) \;
\stackrel{e}{\rightarrow} \; \bigoplus_{i=1}^s k[G]
\;\stackrel{d}{\rightarrow}\; k[G] \; \stackrel{\rm
sum}{\rightarrow}\; k \rightarrow 0.
\end{equation}
Here the differentials $d$ and $e$ are given as follows: $d$
acts on the $i^{\rm th}$ direct summand of $\bigoplus_{i=1}^s
k[G]$ by multiplication with $g_i -1$; $e$ maps the $i^{\rm
th}$ direct summand of $\bigoplus_{i=1}^s k[G]$ to the $i^{\rm
th}$ direct summand of $\bigoplus_{i=1}^s k[G]$ by
multiplication with $1+g_i + \ldots + g_i^{p-1}$ and it maps
the direct summand of $\bigoplus_{i<j} k[G]$ indexed by the
pair $(i,j)$ to the direct sum of the two direct summands of
$\bigoplus_{i=1}^s k[G]$ indexed by $i$ and $j$ by
multiplication with $g_j-1$ and $1-g_i$, respectively (note
that we have here adopted the iterated sign trick explained in
\cite[Theorem~3.6.3]{Weibel}). By tensoring the
$k[G]$-projective resolution~(\ref{resolution}) with $V$ over
$k[G]$ we obtain the complex
\[\ldots \rightarrow \left(\bigoplus_{i=1}^s V\right) \bigoplus
\left(\bigoplus_{i<j} V \right) \; \rightarrow
\bigoplus_{i=1}^s V \rightarrow V\] which we denote by $C.$ (with $V$ sitting in the place $0$) and
whose homology modules are $H.(G,V)$ by definition. We have the
filtration
\[V_0 := \{0\} \subset V_1 := \mathrm{Span}_k(\omega_1)
\subset V_2:= \mathrm{Span}_k(\omega_1, \omega_2) \subset
V_3:=V\] of $V$ by $G$-stable subspaces of $V$. By tensoring
the $k[G]$-projective resolution~(\ref{resolution}) of $k$ with
$V_i$ for $i=0, \ldots, 3$ over $k[G]$ we obtain the filtration
\[0=F_0C. \subset F_1C. \subset F_2 C. \subset F_3C. = C.\]
of the complex $C.$ by subcomplexes. Let
\[{E^1_\mathbf{pq} = H_\mathbf{p+q}(F_\mathbf{p}C./F_\mathbf{p-1}C.) \Rightarrow H_\mathbf{p+q}(C.)}\]
denote the convergent spectral sequence associated with this
filtered complex (\cite[Theorem~5.5.1]{Weibel}). (To avoid
confusion with the prime $p$ we use $\mathbf{p}$ rather than
$p$ to denote the index in the spectral sequence.) Since $G$
acts trivially on $V_\mathbf{p}/V_\mathbf{p-1}$ and since the
characteristic of $k$ is $p$, both the maps $g_i-1$ and $1+g_i+
\ldots + g_i^{p-1}$ act as the zero map on
$V_\mathbf{p}/V_\mathbf{p-1}$ for all $\mathbf{p}$. Hence all
the differentials in $F_\mathbf{p}C./F_\mathbf{p-1}C.$ are zero
and the $E^1$-page of our spectral sequence looks as follows
(with $V_1$ sitting at the place $(1,-1)$):
\[ \begin{array}{ccccc}
\vdots &  & \vdots &  & \vdots \\
\oplusm_{i=1}^s V_1 & \stackrel{\partial_4}{\leftarrow} & \left(\oplusm_{i=1}^s V_2/V_1 \right)
\oplusm \left( \oplusm_{i<j} V_2/V_1 \right) & \leftarrow &\vdots\\ \\
V_1 & \stackrel{\partial_3}{\leftarrow} & \oplusm_{i=1}^s V_2/V_1 & \stackrel{\partial_2}{\leftarrow} &
\left(\oplusm_{i=1}^s V_3/V_2 \right) \oplusm \left(\oplusm_{i<j} V_3/V_2\right)\\ \\
0 & \leftarrow & V_2/V_1 & \stackrel{\partial_1}{\leftarrow} & \oplusm_{i=1}^s V_3/V_2\\ \\
0 & \leftarrow & 0 &\leftarrow & V_3/V_2
\end{array}\]
As all the columns indexed by $p \le 0$ or $p \ge 4$ are zero
we obtain the equalities
\[\dim_k H_0(G,V) = \dim_k E^1_{3,-3} + \dim_k E^2_{2,-2} + \dim_k E^3_{1,-1}\] and
\[\dim_k H_1(G,V) = \dim_k E^3_{3,-2} + \dim_k E^2_{2,-1} +
\dim_k E^3_{1,0}.\] We now assume that $p >3$ and prove
Lemma~\ref{homologicalalgebra} by showing that the six
dimensions on the right hand side are equal to $1$, $0$, $0$,
$s-1$, $0$ and $1$, respectively. It is obvious that the
(first) dimension $\dim_k E^1_{3,-3}= \dim_k V_3/V_2$ is equal
to 1. To determine the remaining five dimensions, let
$\partial_1$, $\partial_2$, $\partial_3$ and $\partial_4$
denote the differentials as indicated in the above diagram of
the $E^1$-page of our spectral sequence. We are now going to
explicitly describe these differentials by using the fact that
they are connecting homomorphisms associated with the short
exact sequences of complexes
\[0 \rightarrow F_1 C. \rightarrow F_2 C. \rightarrow F_2 C./
F_1 C. \rightarrow 0 \] and
\[0 \rightarrow F_2 C./ F_1 C. \rightarrow F_3 C./ F_1 C. \rightarrow
F_3C./F_2 C. \rightarrow 0.\] For any $a_1, \ldots, a_s \in k$
we have
\begin{eqnarray*}
\lefteqn{\partial_1((a_1 \bar{\omega}_3,
\ldots, a_s \bar{\omega}_3))}\\
& =& \overline{(g_1 -1)(a_1
\omega_3)} + \ldots + \overline{(g_s-1)(a_3 \omega_3)}\\
&= &(a_1
\alpha(g_1) + \ldots + a_r \alpha(g_s))
\bar{\omega}_2.
\end{eqnarray*}
In particular, the differential $\partial_1$ is surjective
(since the homomorphism $\alpha$ is non-zero) and we obtain
\[\dim_k E^2_{2,-2} = 0,\] as claimed above, and
\[\dim_k E^2_{3,-2} = s-1.\]
Similarly, for any $a_1, \ldots, a_s \in k$, we have
\[\partial_3((a_1\bar{\omega}_2, \ldots, a_s \bar{\omega}_2)) =
2 ((a_1 \alpha(g_1) + \ldots + a_s\alpha(g_s)) \bar{\omega}_1.\]
In particular $\partial_3$ is surjective as well and we obtain
\[\dim_k E^3_{1,-1} = \dim_k E^2_{1,-1} = 0,\] as claimed above;
hence the differential from $E^2_{3,-2}$ to $E^2_{1,-1}$ is
zero and we conclude
\[\dim_k E^3_{3,-2} = \dim_k E^2_{3,-2} = s-1,\]
as claimed above. As $\alpha(g_1)\not= 0$, the $s-1$ tuples
\begin{eqnarray*}
y_1 & := & (\alpha(g_2) \bar{\omega}_2, - \alpha(g_1) \bar{\omega}_2, 0, \ldots, 0)\\
y_2 & := & (\alpha(g_3) \bar{\omega}_2, 0, -\alpha(g_1) \bar{\omega}_2, 0, \ldots, 0)\\
& \vdots & \\
y_{s-1} & := & (\alpha(g_s) \bar{\omega}_2,  0 , \ldots, 0, - \alpha(g_1) \bar{\omega}_2).
\end{eqnarray*}
of $\bigoplus_{i=1}^s V_2/V_1$ are linearly independent over
$k$ and (hence) span the kernel of~$\partial_3$. For $i=1,
\ldots, s-1$ let $x_i$ be the tuple of $\Big(\bigoplus_{i=1}^s
V_3/V_2\Big) \bigoplus \left(\bigoplus_{i<j} V_3/V_2 \right)$
that has $\bar{\omega}_3$ at the place $(1, i+1)$ and $0$
everywhere else. From the description of the differential $e$
given above we obtain
\[\partial_2(x_i) = y_i \quad \textrm{ for } i=1, \ldots,
s-1.\] In particular the image of $\partial_2$ is equal to the
kernel of $\partial_3$ and we conclude
\[\dim_k E^2_{2,-1}=0,\]
as claimed above. Similarly we obtain that the dimension of the
image of $\partial_4$ is $s-1$ and hence
\[\dim_k E^2_{1,0} = \dim_k (\mathrm{coker}(\partial_4)) = 1.\]
To prove that also $\dim_k E^3_{1,0}=1$ (as claimed above) we
will show that the differential
\[\partial: E^2_{3,-1} = \mathrm{ker}(\partial_2)
\rightarrow \mathrm{coker}(\partial_4) = E^2_{1,0}\] is zero.
This differential is defined as follows, see
\cite[Section~5.4]{Weibel}. Let $X \in E^2_{3,-1} =
\mathrm{ker}(\partial_2: H_2 (F_3 C./ F_2 C.) \rightarrow
H_1(F_2 C./F_1 C.))$. We write $X$ as the residue class of some
$x \in F_3 C_2$. Then $e(x) \equiv e(y) \mod \; F_1 C_1$ for
some $y \in F_2 C_2$ and $\partial (X)$ is equal to the residue
class of $e(x-y)$ in $H_1(F_1 C.)$. As all the differentials in
$F_2 C./ F_1 C.$ are zero (see above) we may choose $y=0$; in
particular $e(x) \in F_1 C_1$ and $\partial(X)$ is equal to the
residue class of $e(x) \in \bigoplus_{i=1}^s V_1$ modulo
$\mathrm{im}(\partial_4)$. More precisely, let $x = (\tilde{x},
\hat{x})$ with $\tilde{x} \in \bigoplus_{i=1}^s V_3$ and
$\hat{x} \in \bigoplus_{i<j} V_3$. From formula~(\ref{beta})
and the fact that $\alpha$ is a homomorphism we obtain
\[\beta(g^m) = m \beta(g) + \binom{m}{2} 2 \alpha(g)^2\]
for any $g \in G$ and $m\ge 1$. For $i=1, \ldots, s$ we
therefore have
\begin{eqnarray*}
\lefteqn{(1+g_i + \ldots + g_i^{p-1})(\omega_3)}\\
&&= \omega_3 + (\omega_3 + \alpha(g_i) \omega_2 + \beta(g_i) \omega_1) + \ldots +
(\omega_3+  \alpha(g_i^{p-1}) \omega_2 + \beta(g_i^{p-1})\omega_1 )\\
&&=p \omega_3+ \binom{p}{2} \alpha(g_i) \omega_2 +
\left(\binom{p}{2} \beta(g_i) + \binom{p}{3} 2 \alpha(g_i)^2 \right) \omega_1
\end{eqnarray*}
and hence
\[(1+g_i + \ldots + g_i^{p-1})\omega_3 = 0\]
since $\mathrm{char}(k) = p > 3$; similarly we have $(1+ g_i+
\ldots + g_i^{p-1}) \omega_2 = 0$ and $(1+g_i+ \ldots
+g_i^{p-1}) \omega_1 = 0$. Therefore $e((\tilde{x},0)) = 0$ and
$e(x) = e((0,\hat{x}))$. We may assume that $\hat{x} = (c_{ij}
\omega_3)_{i < j}$ with some $c_{ij} \in k$ because $e$ maps
any element of $F_2 C_3$ into $\mathrm{im}(\partial_4)$. Then
$e((0, \hat{x})) = (a_1 \omega_2 + b_1 \omega_1, \ldots, a_s
\omega_2 + b_s \omega_1)$ where $a_1, \ldots, a_s$ and $b_1,
\ldots, b_s$ are given as follows. Let $M(\alpha)$ denote the
matrix whose rows are indexed by $k=1, \ldots, r$, whose
columns are indexed by the pairs $(i,j)$ with $i < j$ and whose
entry at the place $(k,(i,j))$ is equal to
\[\begin{cases}
\alpha(g_j) & \textrm{if } k=i\\
-\alpha(g_i) & \textrm{if } k=j\\
0 & \textrm{else}.
\end{cases}\]
Similarly the matrix $M(\beta)$ is defined. Then
\[\left(\begin{array}{c}a_1 \\ \vdots \\ a_s \end{array}\right)
= M(\alpha)((c_{ij})_{i<j}) \quad \textrm{ and } \quad
\left(\begin{array}{c} b_1 \\ \vdots \\ b_s \end{array}\right)
= M(\beta)((c_{ij})_{i < j}).\] We know that $a_1 = \ldots =
a_s =0$ because $X \in \mathrm{ker}(\partial_2)$. As
$\mathrm{im}(\partial_4)$ is equal to the kernel of the map
\[\bigoplus_{i=1}^s V_1 \rightarrow k, \quad (d_1 \omega_1,
\ldots, d_s \omega_1) \mapsto  \alpha(g_1)d_1+ \ldots +
\alpha(g_s)d_s\] (see above) we need to show that
\[\alpha(g_1)b_1 + \ldots + \alpha(g_s)b_s = 0.\]
For all $i < j$ the $(i,j)$-component of both the vectors
\[(\alpha(g_1), \ldots, \alpha(g_s)) M(\beta) \quad \textrm{ and } \quad (-\beta(g_1),
\ldots, - \beta(g_s)) M(\alpha).\] is equal to $\alpha(g_i) \beta(g_j) -\alpha(g_j)\beta(g_i)$. Hence
\[(\alpha(g_1), \ldots, \alpha(g_s)) M(\beta) = (-\beta(g_1),
\ldots, - \beta(g_s)) M(\alpha).\] Therefore we have
\begin{eqnarray*}
\lefteqn{\alpha(g_1) b_1 + \ldots + \alpha(g_s) b_s = (\alpha(g_1), \ldots, \alpha(g_s))
\left(\begin{array}{c} b_1 \\ \vdots \\ b_s \end{array}\right)}\\
&=& (\alpha(g_1), \ldots, \alpha(g_s)) M(\beta) ((c_{ij})_{i<j})\\
&=& (-\beta(g_1), \ldots, -\beta(g_s)) M(\alpha) ((c_{ij})_{i<j})\\
&=& (-\beta(g_1), \ldots, -\beta(g_s)) \left(\begin{array}{c} a_1 \\ \vdots \\ a_s \end{array} \right)
= (-\beta(g_1), \ldots, -\beta(g_s)) \left(\begin{array}{c} 0 \\ \vdots \\ 0 \end{array}\right) = 0,
\end{eqnarray*}
as desired.\\
We now turn to the case $p=3$. The above proof shows that the
first five dimensions are the same as in the case $p >3$.
However the (final) dimension $\dim_k E^3_{1,0}$ is equal to
$0$ in the case $p=3$, as we are going to prove now. As above
we have
\[ \dim_k E^2_{1,0} = \dim_k (\mathrm{coker}(\partial_4)) =1.\]
In order to prove that $\dim_k E^3_{1,0} = 0$ we will show that
the differential
\[\partial: E^2_{3,-1} = \ker(\partial_2) \rightarrow
\mathrm{coker}(\partial_4) = E^2_{1,0}\] is surjective. Using
the same calculation as in the case $p>3$ we obtain
\[(1+g_1 +g_1^2) (\omega_3) = 2 \alpha(g_1)^2 \omega_1.\]
Since $\alpha(g_1) \not= 0$, the tuple
$(2\alpha(g_1)^2\omega_1, 0, \ldots, 0)$ does not lie in the
image of~$\partial_4$ (use the same reasoning as in the case $p
>3$). Hence the differential $\partial$ is surjective and we
obtain
\[\dim_k E^3_{1,0} = 0,\]
as claimed above.\\
We finally prove Lemma~\ref{homologicalalgebra} in the case
$p=2$.  We may and will assume that not only $\alpha(g_1) \not=
0$ but $\alpha(g_i) \not= 0$ for all $i=1, \ldots, s$: if
$\alpha(g_i) = 0$ for some $i >1$, we replace $g_i$ by $g_i
g_1$. For brevity, we will write just $\alpha_i$ and $\beta_i$
for $\alpha(g_i)$ and $\beta(g_i)$, respectively. As $p=2$, the
map $\beta$ is a homomorphism as well. Also, the group $G$ acts
trivially on both $\omega_1$ and $\omega_2$ and hence on $V_2$.
Moreover the norm element $1+g_i$ is equal to $g_i -1$ and we
have
\[(g_i-1) (\omega_3) = \alpha_i \omega_2 + \beta_i
\omega_1\] for all $i=1, \ldots, s$. In particular all
differentials in $F_2 C.$ and $C./F_2 C.$ are zero and the long
exact sequence associated with the short exact sequence of
complexes
\[0 \rightarrow F_2 C. \rightarrow C. \rightarrow C./F_2 C.
\rightarrow 0\] looks as follows:
\[\xymatrix{&& \ldots \ar[r] & \left(\oplusm_{i=1}^s
V_3/V_2\right) \bigoplus \left(\oplusm_{i < j} V_3/V_2\right)
\ar `/16pt[r]  `[l]  `[llld]_{{\partial}} `[r] [dll]    &\\
& \oplusm_{i=1}^s V_2 \ar[r] & H_1(C.) \ar[r] & \oplusm_{i=1}^s V_3/V_2
\ar `/12pt[r]  `[l]  `[llld] `[r] [dll]    &\\
& V_2 \ar[r] & H_0(C.) \ar[r] & V_3/V_2 \ar[r] & 0. }\] Let
${\partial}$ denote the connecting homomorphism as indicated
above. When restricted to the direct sum $\bigoplus_{i=1}^s
V_3/V_2$ the map ${\partial}$ is the direct sum of the maps
$V_3/V_2 \rightarrow V_2$, $a\bar{\omega}_3 \mapsto a
\left(\alpha_i \omega_2 + \beta_i \omega_1\right)$, $i=1,
\ldots, s$. When restricted to the direct summand indexed by
the pair $(i,j)$, the map $\partial$ sends the basis element
$\bar{\omega}_3$ of $V_3/V_2$ to the tuple of
$\bigoplus_{i=1}^s V_2$ whose $i^\mathrm{th}$ component is
equal to $\alpha_j \omega_2 + \beta_j \omega_1$, whose
$j^\mathrm{th}$ component is equal to $\alpha_i \omega_2 +
\beta_i \omega_1$ and whose other components are all equal to
$0$. In particular the dimension of the image of~$\partial$ is
equal to to rank of the matrix whose columns are indexed by $1,
\ldots, s$ and the pairs $(i,j)$ for $i<j$, whose rows are
indexed by $1, \ldots, 2s$ and whose columns indexed by $i$ and
$(i,j)$ are equal to
\[(0, \ldots, 0, \alpha_i, 0, \ldots, 0|0, \ldots, 0, \beta_i,
0, \ldots, 0)^T\] and
\[(0, \ldots, 0, \alpha_j, 0, \ldots, 0, \alpha_i, 0, \ldots,
0|0, \ldots, 0, \beta_j, 0, \ldots, 0, \beta_i, 0, \ldots,
0)^T,\] respectively. Let $R_1, \ldots, R_{2s}$ denote the rows
of this matrix. As $\alpha_1, \ldots, \alpha_s$ are non-zero,
the rank does not change if we replace the row $R_{s+i}$ by
$\alpha_i R_{s+i} + \beta_i R_i$ for $i=1, \ldots, s$. Then the
columns indexed by $i$ and $(i,j)$ are equal to
\[(0, \ldots, 0, \alpha_i, 0, \ldots, 0|0, \ldots, 0)^T\]
and \begin{align*}(0, \ldots, 0, \alpha_j, 0, \ldots, 0,
&\alpha_i, 0, \ldots, 0|\\&0, \ldots, 0, \alpha_i\beta_j +
\alpha_j \beta_i, 0, \ldots, 0, \alpha_i\beta_j +\alpha_j
\beta_i, 0, \ldots, 0)^T,\end{align*} respectively. In
particular the ($s\times s$)-submatrix in the lower left-hand
corner is the zero matrix and the ($s \times s$)-submatrix in
the upper left-hand corner is the diagonal matrix with diagonal
entries $\alpha_1, \ldots, \alpha_s$ and has rank $s$. If
$\beta$ is a $k$-multiple of $\alpha$ the submatrix in the the
lower right-hand corner is the zero-matrix as well (or in fact
the empty matrix if $s=1$) and the rank of the total matrix is
$s$. Otherwise, we claim that the rank of the submatrix in the
lower right-hand corner is $s-1$, so the rank of the total
matrix is $2s-1$. From the above long exact sequence we then
obtain
\begin{eqnarray*}
\lefteqn{\dim_k H_0(G,V) - \dim_k H_1 (G,V) = \dim_k H_0(C.) - \dim_k H_1(C.)}\\
&=& \dim_k V_3/V_2 + \dim_k V_2 - \dim_k \left(\bigoplus_{i=1}^s V_3/V_2\right)
- \dim_k \mathrm{coker}({\partial})\\
&=&
\begin{cases}
1+2-s-(2s-s) = 3-2s & \textrm{if } \beta = c \alpha \textrm{ for some } c \in k,\\
1+2-s-(2s-(2s-1)) = 2-s & \textrm{else,}
\end{cases}
\end{eqnarray*}
as claimed above. Finally, to prove the above claim, we first
observe that the rank of the submatrix in the lower right-hand
corner is at most $s-1$ because the sum of all rows in this
submatrix is the zero row. To prove that the rank is at least
$s-1$ we will find $s-1$ columns of the submatrix that are
linearly independent. To this end, let $i_1 := 1$; as $\beta$
is not a $k$-multiple of $\alpha$, i.e.\ as the vector
$(\beta_1, \ldots, \beta_s)$ is not a $k$-multiple of the
vector $(\alpha_1, \ldots, \alpha_s)$, there exists $i_2 \in
\{2, \ldots, s\}$ such that $\alpha_1 \beta_{i_2} +
\alpha_{i_2} \beta_1 \not=0$; if $\alpha_{i_2} \beta_i +
\alpha_i \beta_{i_2} = 0$ for all $i \in \{1, \ldots, s\}
\backslash \{i_1, i_2\}$, let $i_3, \ldots, i_s$ run through
all indices in $\{1, \ldots, s\} \backslash \{i_1, i_2\}$;
otherwise choose $i_3 \in \{1, \ldots, s\} \backslash \{i_1,
i_2\}$ such that $\alpha_{i_2} \beta_{i_3} + \alpha_{i_3}
\beta_{i_2} \not= 0$; continuing this way we end up with
indices $i_1, \ldots, i_s$ such that $\{i_1, \ldots, i_s\} =
\{1, \ldots, s\}$ and such that there exists an index $l \in
\{1, \ldots, s-1\}$ with the property that $\alpha_{i_m}
\beta_{i_{m+1}} + \alpha_{i_{m+1}}\beta_{i_m}$ is not equal to
$0$ for $m=1, \ldots, l$ and is equal to $0$ for $m=l+1,
\ldots, s-1$; in particular we have $\alpha_{i_l} \beta_{i_m} +
\alpha_{i_m} \beta_{i_l} \not=0$ as well for $m=l+1, \ldots,
s-1$. We now consider the $s-1$ pairs $\widetilde{(i_1, i_2)},
\ldots, \widetilde{(i_{l-1}, i_l)}, \widetilde{(i_l, i_{l+1})},
\ldots, \widetilde{(i_l,i_s)}$ where the notation
$\widetilde{(i,j)}$ stands for the pair $(i,j)$ if $i<j$ and
for $(j,i)$ else. Then the $s-1$ columns $C_1, \ldots, C_{s-1}$
corresponding to these $s-1$ pairs are linearly independent: if
we have $a_1, \ldots, a_{s-1} \in k$ such that $a_1 C_1 +
\ldots + a_{s-1} C_{s-1} = 0$, then by successively looking at
the $i_1^\textrm{th}, \ldots, i_{l-1}^\textrm{th}$ component we
see that $a_1= 0, \ldots, a_{l-1}=0$ and by finally looking at
the $i_{l+1}^\textrm{th}, \ldots, i_s^\textrm{th}$ component we
see that $a_l=0, \ldots, a_{s-1} = 0$, as desired.
\end{proof}

\section{The $p$-rank representation.} \label{pranksection}

In this section we assume that $p >3$ and that $\pi$ is not
unramified. Let $D$ be a $G$-invariant effective canonical
divisor on $X$. We will see in Lemma~\ref{efecDG} below that
such a divisor always exists. As the divisor~$D$ is canonical
we have an isomorphism between the space
\[H_D := H^0(X, \Omega_X(D))\]
and the space $H^0(X, \Omega_X^{\otimes 2})$, this paper's main
object of study. As $D$ is effective we furthermore have the
decomposition
\[H_D = H_D^\mathrm{s} \oplus H_D^\mathrm{n}\]
where $H_D^\mathrm{s}$ and $H_D^\mathrm{n}$ are the spaces of
semisimple and nilpotent differentials with respect to the
Cartier operator on $H_D$, see \cite{Nak:85}, \cite{Stalder04}
or \cite{Subrao}. Note that this decomposition depends on the
actual divisor $D$ rather than just on its equivalence class;
this will become apparent in Theorem~\ref{semisimple} below,
for instance. We therefore work with the notation $H_D$ rather
than with $H^0(X, \Omega_X^{\otimes 2})$ in this section.

Since $D$ is $G$-invariant the above decomposition is a
decomposition of $k[G]$-modules. While little seems to be known
about the $k[G]$-module $H_D^\mathrm{n}$, the $k[G]$-module
$H_D^\mathrm{s}$ has been studied by various authors
(\cite{borne04}, \cite{Nak:85}, \cite{Stalder04}) and is called
the {\em $p$-rank representation}. As $G$ is a $p$-group the
only irreducible $k[G]$-module is the trivial representation
$k$ and has projective cover $k[G]$ \cite[15.6]{SerreLinear}.
We conclude that
\[H_D^\mathrm{s} \cong \mathrm{core}(H_D^\mathrm{s}) \oplus
k[G]^{b(G,D,k)}\] where $\mathrm{core}(H_D^\mathrm{s})$ denotes
the direct sum of non-projective indecomposable summands of
$H_D^\mathrm{s}$ (see \cite[Definition~2.3]{Stalder04}) and
$b(G,D,k)$ is called the {\em Borne invariant} corresponding to
$G$, $D$ and the trivial representation $k$ (see
\cite[Definition~5.1]{Stalder04}). The goal of this section is
to compute the multiplicity $b(G,D,k)$ when we impose further
conditions on the divisor~$D$, see Theorem~\ref{semisimple}
below. Via the isomorphisms $H_D \cong H^0(X, \Omega^{\otimes
2})$ and $H^0(X,\Omega^{\otimes 2})_G \cong H^1(G, \T_X)$,
Theorem~\ref{semisimple} gives us some information on the space
$H^1(G,\T_X)$, see Corollary~\ref{corollary1} below, or can be
used to derive some information on the nilpotent part
$H_D^\mathrm{n}$ if the $k[G]$-module structure of $H^0(X,
\Omega_X^{\otimes 2})$ is known, see
Corollary~\ref{directsummand}.

We begin with the following lemma which is of independent
interest and which we therefore formulate in a way that is
independent from the context of this paper.

\begin{lemma}\label{essential}
Let $Z$ be a connected smooth projective curve of genus at
least~$1$ over an algebraically closed field $k$, and let $S$
be a finite set of points on $Z$. Then there exists a global
non-zero holomorphic differential on $Z$ such that none of its
zeroes belongs to $S$.
\end{lemma}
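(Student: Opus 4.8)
The plan is to interpret the condition that a global holomorphic differential $\omega$ on $Z$ vanishes at a given point $P$ as membership of $\omega$ in a linear subspace of the finite-dimensional $k$-vector space $V := H^0(Z, \Omega_Z)$, and then to invoke the elementary fact that a vector space over an infinite field is never a union of finitely many proper subspaces.

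First I would set, for each $P \in S$,
\[
V_P := H^0(Z, \Omega_Z(-[P])) \subseteq V ,
\]
so that $V_P$ is precisely the set of global holomorphic differentials on $Z$ that vanish at $P$. Then a non-zero $\omega \in V$ has the desired property — none of its zeroes lies in $S$ — exactly when $\omega \notin \bigcup_{P \in S} V_P$; moreover any element outside this union is automatically non-zero, since $0 \in V_P$. Hence it suffices to prove that $V \neq \bigcup_{P \in S} V_P$.

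Next I would check that each $V_P$ is a \emph{proper} subspace of $V$. Writing $g_Z$ for the genus of $Z$ and applying the Riemann--Roch theorem to the divisor $[P]$ gives
\[
\dim_k H^0(Z, \cO_Z([P])) - \dim_k V_P = \deg([P]) + 1 - g_Z = 2 - g_Z ,
\]
so $\dim_k V_P = g_Z - 2 + \dim_k H^0(Z, \cO_Z([P]))$. Since $g_Z \ge 1$ the curve $Z$ is not isomorphic to $\PP^1$, so there is no non-constant rational function on $Z$ with a single simple pole at $P$; therefore $\dim_k H^0(Z, \cO_Z([P])) = 1$ and $\dim_k V_P = g_Z - 1 < g_Z = \dim_k V$. (Equivalently: no point of a curve of genus at least~$1$ is a base point of the canonical linear system.) In particular $V \neq 0$.

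Finally, since $k$ is algebraically closed it is infinite, and a $k$-vector space is never the union of finitely many proper subspaces; hence there exists $\omega \in V \setminus \bigcup_{P \in S} V_P$, and this $\omega$ proves the lemma. The only step requiring any care is the properness of the subspaces $V_P$, and that is exactly where the hypothesis on the genus of $Z$ enters; everything else is formal.
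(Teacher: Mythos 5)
Your proof is correct. The one point that needs care --- that each $V_P = H^0(Z,\Omega_Z(-[P]))$ is a \emph{proper} subspace of $V = H^0(Z,\Omega_Z)$ --- is handled correctly: Riemann--Roch gives $\dim_k V_P = g_Z - 2 + \dim_k H^0(Z,\cO_Z([P]))$, and $\dim_k H^0(Z,\cO_Z([P])) = 1$ because a non-constant function with a single simple pole would give a degree-one map to $\PP^1$, impossible for $g_Z \ge 1$. The concluding step, that a vector space over an infinite field is not a finite union of proper subspaces, is standard.

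Your route differs from the paper's in packaging, though the essential input is the same. The paper first disposes of $g_Z = 1$ separately (every non-zero holomorphic differential on an elliptic curve is nowhere vanishing) and then, for $g_Z \ge 2$, argues by induction on $|S|$: the base case $|S|=1$ is exactly the properness of $V_P$, proved via the Weierstrass Gap Theorem ($1$ is a gap number), and the inductive step avoids finitely many lines in the two-dimensional space spanned by two suitably chosen differentials $\omega$ and $\phi$. You instead prove properness of every $V_P$ uniformly (your computation of $l([P])$ is precisely the statement that $1$ is a gap number, and it also covers $g_Z=1$, where $\dim_k V_P = 0 < 1$) and then avoid all the $V_P$ at once inside $V$ itself rather than inside a two-dimensional slice. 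What your version buys is the elimination of both the case split on the genus and the induction; what the paper's version buys is that the only general position fact needed is the very concrete one that $k^2$ is not a union of finitely many lines. Both ultimately rest on $k$ being infinite.
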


\begin{proof}
If $g_Z=1$ then every non-zero holomorphic differential on $Z$
is non-vanishing \cite[III Proposition~1.5]{SilvermanArith},
and the result is obvious in
this case.\\
We therefore may and will assume that $g_Z \ge 2$. We will
prove Lemma~\ref{essential} by induction on $r:=|S|$. The case
$r=0$ is
trivial. So let $r \ge 1$ and let $P \in S$.\\
For the base step $r=1$ we need to show that
\[\mathrm{dim}_k H^0(Z,\Omega_Z) > \mathrm{dim}_k H^0(Z, \Omega_Z(-[P])).\]
By the Riemann-Roch theorem
\cite[Theorem~IV~1.3]{Hartshorne:77} the right-hand side is
equal to
\[(2g_Z-2-1)+1-g_Z+l([P]) = g_Z -2 + l([P]) = g_Z-1\]
because $1$ is a gap number in the sense of the Weierstrass Gap
Theorem \cite[Theorem~I.6.8]{StiBo} since $g_Z \ge 2$. Since
the left-hand side is equal to $g_Z$
this proves the base step $r=1$.\\
We now prove the inductive step. By the inductive hypothesis
there exists a global non-zero holomorphic differential
$\omega$ on $Z$ such that none of its zeroes belongs to
$S\backslash \{P\}$. Furthermore, by the case $r=1$ there
exists a global non-zero holomorphic differential $\phi$ on $Z$
that does not vanish at $P$. So for each $Q \in S$ at least one
of the two differentials $\omega$, $\phi$ does not vanish at
$Q$. Hence the sets
\[\{(\lambda,\mu) \in k^2: \lambda \omega + \mu \phi \textrm{
vanishes at } Q\}, \quad Q \in S, \] are one-dimensional subspaces of $k^2$.
Since $k$ is infinite we can avoid these finitely many lines
and hence find a pair $(\lambda, \mu) \in k^2$ such that none
of the zeroes of $\lambda \omega + \mu \phi$ belongs to $S$.
\end{proof}

For the following lemma we recall that the divisor of any
non-zero meromorphic differential is a canonical divisor
\cite[I.5.11]{StiBo}.

\begin{lemma} \label{efecDG}
There exists a $G$-invariant effective canonical divisor $D$ on
$X$ whose support contains $X_{ \mathrm{ram}}$. Moreover, if
$g_Y = 0$, we may choose $D$ in such a way so that its support
is equal to $X_{ \mathrm{ram}}$ and, if $g_Y \ge 1$, we may
choose $D$ of the form $D=\mathrm{div}(\pi^*\phi)$ where $\phi$
is a non-zero holomorphic differential on $Y$ whose zeroes do
not belong to $Y_{\mathrm{ram}}$.
\end{lemma}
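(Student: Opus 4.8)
The plan is to treat the two ramification cases ($g_Y = 0$ and $g_Y \geq 1$) separately, since the construction of the desired $G$-invariant effective canonical divisor is quite different.

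First I would handle the case $g_Y \geq 1$. The idea is to pull back a well-chosen canonical divisor from $Y$. By Lemma~\ref{essential} applied to $Z = Y$ (of genus at least~$1$) and $S = Y_{\mathrm{ram}}$, there exists a non-zero holomorphic differential $\phi$ on $Y$ none of whose zeroes lies in $Y_{\mathrm{ram}}$. Set $D := \operatorname{div}(\pi^*\phi)$; this is clearly $G$-invariant because $\phi$ is defined on the quotient. I would then compute $D$ explicitly: by the behaviour of differentials under the cover $\pi$, one has $\operatorname{div}(\pi^*\phi) = \pi^*(\operatorname{div}\phi) + R$, where $R$ is the ramification divisor. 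Since $\operatorname{div}\phi$ is effective (it is the divisor of a holomorphic differential), its pullback $\pi^*(\operatorname{div}\phi)$ is effective, and $R$ is effective, so $D$ is effective. Finally, since $\pi^* K_Y + R$ is a canonical divisor on $X$ by the formula recorded in the Notations (using $\operatorname{div}\phi \sim K_Y$), the divisor $D$ is canonical. The support of $D$ contains $\operatorname{supp}(R) = X_{\mathrm{ram}}$, as required.

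Next I would handle the case $g_Y = 0$, where we cannot use Lemma~\ref{essential} (there are no non-zero holomorphic differentials on $\PP^1$). Here the goal is stronger: the support of $D$ should be \emph{exactly} $X_{\mathrm{ram}}$. I would look for a canonical divisor of $X$ supported on $X_{\mathrm{ram}}$. Take a canonical divisor $K_Y$ on $Y \cong \PP^1$ of degree $-2$; one can choose $K_Y = -2[Q_0]$ or, more flexibly, a divisor supported on the $r$ branch points $\pi(P_1),\dots,\pi(P_r)$ — and here I would want $r \geq 1$, which holds since $\pi$ is assumed not unramified (wait, that hypothesis is in Section~\ref{pranksection}, not this lemma — so I should instead argue that if $X_{\mathrm{ram}} = \emptyset$ the assertion ``support equal to $X_{\mathrm{ram}}$'' just means $D=0$, impossible for a genus $\geq 2$ curve, so in fact $g_Y=0$ forces $r\geq 1$ by Riemann--Hurwitz). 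Write $K_Y = \sum_{j=1}^r a_j [\pi(P_j)]$ with integers $a_j$ summing to $-2$; then $K_X = \pi^*K_Y + R = \sum_{j=1}^r \bigl(e_0(P_j) a_j + d(P_j)\bigr)\,[\text{points over }\pi(P_j)]$, which is supported on $X_{\mathrm{ram}}$. The remaining task is to choose the $a_j$ so that every coefficient $e_0(P_j)a_j + d(P_j)$ is \emph{strictly positive}, i.e.\ $a_j \geq 1 - \lfloor d(P_j)/e_0(P_j)\rfloor$ — wait, we need $e_0(P_j) a_j + d(P_j) \geq 1$, i.e.\ $a_j \geq \lceil (1 - d(P_j))/e_0(P_j)\rceil$. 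Since $d(P_j) \geq e_0(P_j) - 1 \geq 1$ (as $\pi$ is ramified at $P_j$), we have $d(P_j) \geq 1$, so $a_j = 0$ already works for each individual $j$ to keep the coefficient $\geq d(P_j) \geq 1 > 0$; but we also need $\sum a_j = -2$. So I would set, say, $a_j = 0$ for $j \geq 2$ and $a_1 = -2$, and then check the single coefficient $e_0(P_1)(-2) + d(P_1) = d(P_1) - 2e_0(P_1)$ is positive — which need not hold. The honest fix: distribute the total $-2$ as negatively as the slack allows, using that $\sum_j d(P_j)$ is large. Concretely, $\deg K_X = 2g_X - 2 \geq 2 > 0$ and $\deg K_X = \sum_j (e_0(P_j)a_j + d(P_j))$; so we just need to realize a fixed total degree with all summands $\geq 1$. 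Since each $d(P_j) \geq 1$ and we may decrease coefficients one branch point at a time by $e_0(P_j)$ while staying $\geq 1$ as long as the coefficient exceeds $e_0(P_j)$, a greedy argument shows a valid choice exists precisely because $\deg K_X \geq r$ (each coefficient $\geq 1$) — and $\deg K_X = 2g_X - 2 \geq 2$, combined with Riemann--Hurwitz $2g_X - 2 = |G|(-2) + \sum_j (\text{orbit size})\cdot d(P_j)/(\text{local factor})$... This bookkeeping is the one genuinely fiddly point.

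The main obstacle I anticipate is exactly this last combinatorial adjustment in the $g_Y = 0$ case: arranging a degree-$(-2)$ divisor on $\PP^1$ supported on the branch locus whose pullback-plus-ramification has all coefficients strictly positive. The cleanest route is probably to observe that $\mathrm{div}(\pi^*\phi)$ for a suitable \emph{meromorphic} differential $\phi$ on $\PP^1$ with poles only at branch points works: choose $\phi$ with $\operatorname{div}(\phi) = -2[\pi(P_1)]$, so $\operatorname{div}(\pi^*\phi) = R - 2e_0(P_1)[\text{orbit of }P_1]$, and then add to $\phi$ a rational function adjustment, or more simply invoke that for $g_Y = 0$ any effective divisor of degree $2g_X - 2$ that is a pullback pattern can be matched — but since the cleanest fully rigorous statement is ``such $a_j$ exist because $2g_X-2 = \deg K_X \geq $ (number of ramified points on $X$) $= \sum_j |G/G(P_j)| \geq r$,'' I would present it as a short counting lemma. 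Everything else (effectivity, canonicity, $G$-invariance) is immediate from the formula $K_X = \pi^*K_Y + R$ and Lemma~\ref{essential}.
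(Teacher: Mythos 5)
Your $g_Y\ge 1$ case is exactly the paper's argument (Lemma~\ref{essential} with $S=Y_{\mathrm{ram}}$, then $\mathrm{div}(\pi^*\phi)=\pi^*\mathrm{div}(\phi)+R$), and is fine. The gap is in the $g_Y=0$ case, where you correctly reduce to choosing integers $a_j$ with $\sum_j a_j=-2$ and $e_0(P_j)a_j+d(P_j)\ge 1$ for all $j$, but you never actually establish that such a choice exists. Your proposed criterion ``a valid choice exists precisely because $\deg K_X\ge r$ (or $\ge |X_{\mathrm{ram}}|$)'' is not a proof: the coefficient over $\pi(P_j)$ can only be varied in steps of $e_0(P_j)$, so you cannot realize an arbitrary partition of $\deg K_X$ into positive parts. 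The correct existence condition is $\sum_{j=1}^r \lfloor (d(P_j)-1)/e_0(P_j)\rfloor \ge 2$, and verifying it genuinely uses the standing hypotheses of Section~\ref{pranksection} that you set aside: since $G$ is a $p$-group one has $e_0=e_1$, hence $d(P_j)\ge 2(e_0(P_j)-1)$ and $(d(P_j)-1)/e_0(P_j)\ge 2-3/e_0(P_j)$, so each floor is $\ge 1$ \emph{only because} $p>3$ forces $e_0(P_j)\ge 5$ (for $p=2$ or $3$ the coefficient can come out $0$, e.g.\ a weakly ramified $\ZZ/2$-cover, so the support would miss a ramified point). This settles $r\ge 2$. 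For $r=1$ you need the stronger bound $d(P_1)\ge 2e_0(P_1)+1$, which fails for weakly ramified covers; the paper closes this by observing via Riemann--Hurwitz that $g_Y=0$, $r=1$ and $g_X\ge 2$ force $\pi$ to \emph{not} be weakly ramified, whence $\sum_{i\ge 2}(e_i(P_1)-1)\ge p-1\ge 4$ and the coefficient $-2e_0(P_1)+d(P_1)=-2+\sum_{i\ge 2}(e_i(P_1)-1)$ is positive. Your sketch gestures at Riemann--Hurwitz but never extracts this, and your concrete test case ($a_1=-2$, ``which need not hold'') is exactly the point left unresolved.

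For comparison, the paper's proof is your scheme with the explicit choices $K_Y=-2[\pi(P_1)]$ when $r=1$ and $K_Y=-[\pi(P_1)]-[\pi(P_2)]$ when $r\ge 2$, followed by the two positivity checks above. So the route is the same; what is missing from your write-up is precisely the ``fiddly bookkeeping'' you flagged, and it cannot be replaced by a degree count alone.
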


\begin{proof}
If $\phi$ is a non-zero meromorphic differential on $Y$ then
$\pi^*\phi$ is a non-zero $G$-invariant meromorphic
differential on $X$ and its divisor $\mathrm{div}(\pi^* \phi)$
is hence a $G$-invariant canonical divisor on $X$. Furthermore
we have
\[
 \mathrm{div}(\pi^* \phi)=\pi^* \mathrm{div}(\phi)+R
\]
by \cite[Theorem~3.4.6]{StiBo}. We will choose $\phi$ in such a
way so that $D=\mathrm{div}(\pi^*\phi)$ is also effective and
its support
contains $X_{\mathrm{ram}}$.\\
If $g_Y>0$, then by Lemma~\ref{essential} there exists a
non-zero global holomorphic differential $\phi$ on~$Y$ whose
zeroes do not belong to $Y_\mathrm{ram}$. Then
$D=\mathrm{div}(\pi^*\phi)$ is certainly effective and its
support contains $X_\mathrm{ram}$. If $g_Y=0$ and $r=1$,  we
select a generator $x$ of the function field $K(Y)$ of $Y \cong
\PP^1_k$ such that $dx=-2[\pi(P_1)]$ and put
 $\phi:=dx$. Then we have
\begin{eqnarray*}
 \lefteqn{\mathrm{div}(\pi^* \phi) = \pi^* \mathrm{div}(\phi)+ R}\\
 & = & -2e_0(P_1) \sum_{P \in GP_1} [P]+R
 = \left(-2+ \sum_{i=2}^\infty (e_i(P_1)-1) \right)\sum_{P \in GP_1}[P].
\end{eqnarray*}
The Riemann-Hurwitz formula
\[
 2g_X-2=-2 |G| + \sum_{P \in GP_1} d(P_1)
\]
\cite[Corollary~IV~2.4]{Hartshorne:77} together with the
assumption that $g_X\geq 2$ tells us that $\pi$ is not weakly
ramified. Hence the coefficient $-2+\sum_{i=2}^\infty
(e_i(P_1)-1)$ is positive because $p>3$. Thus the divisor
$D=\mathrm{div}(\pi^*\phi)$ is effective and its support is
equal to $X_{\mathrm{ram}}$, as desired. If $g_Y=0$ and $r \geq
2$ we select a generator $x$ of $K(Y)$ such that $x(\pi(P_1))
\neq \infty$ and $x(\pi(P_2)) \neq \infty$ and put
$\phi=\frac{dx}{(x-x(\pi(P_1)))(x-x(\pi(P_2)))}$. Then we have
\[\mathrm{div}(\phi) = - [\pi(P_1)] - [\pi(P_2)]\]
and hence
\begin{eqnarray*} \label{eqdif2}
 \lefteqn{\mathrm{div}(\pi^*\phi) = - e_0(P_1) \sum_{P \in GP_1}[P]
 - e_0(P_2) \sum_{P\in GP_2} [P]
   +\sum_{j=1}^r  \sum_{P \in GP_j} d(P_j) [P] }\\
 &=& \sum_{j=1}^2 \left( -1 + \sum_{i=1}^{\infty} (e_i(P_j)-1) \right) \sum_{P \in GP_j}[P]+
 \sum_{j=3}^r  \sum_{P \in GP_j} d(P_j) [P].
\end{eqnarray*}
Again we see that $D=\mathrm{div}(\pi^*\phi)$ is effective and
its support is equal to $X_\mathrm{ram}$, as desired.
 \end{proof}

Let $\gamma_X$ and $\gamma_Y$ denote the $p$-ranks of $X$ and
$Y$, respectively. Furthermore, we recall that $E_\mathrm{red}$
denotes the reduced divisor for any effective divisor $E$.

\begin{theorem}\label{semisimple}
Let $D$ be a $G$-invariant effective canonical divisor on $X$
as in Lemma~\ref{efecDG}. Then the $k[G]$-module
$H_D^\mathrm{s}$ is free of rank
\[ b(G,D,k) = \begin{cases} \gamma_Y - 1 + r & \textrm{if } g_Y=0\\
\gamma_Y - 1 + r + \mathrm{deg}(\mathrm{div}(\phi)_\mathrm{red})& \textrm{if } g_Y \ge 1.
\end{cases}\]
\end{theorem}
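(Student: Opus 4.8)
The plan is to reduce the statement to one about the reduced divisor $D_{\mathrm{red}}$, apply Nakajima's equivariant form of the Deuring--Shafarevich formula to obtain freeness, and then carry out the numerical bookkeeping. First I would replace $D$ by $D_{\mathrm{red}}$. Since $D$ is effective, the Cartier operator $\mathcal{C}$ maps $H_D=H^0(X,\Omega_X(D))$ into itself; and since $\mathcal{C}$ sends a differential with a pole of order $m\ge 1$ at a point to one with a pole of order at most $\lceil m/p\rceil$ there, the $n$-fold iterate $\mathcal{C}^n$ maps $H^0(X,\Omega_X(\widetilde D))$ into $H^0(X,\Omega_X(\lceil\widetilde D/p^n\rceil))$ for every effective divisor $\widetilde D$. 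As $\lceil D/p^n\rceil=D_{\mathrm{red}}$ for $n\gg 0$, the space $H_D^{\mathrm s}=\bigcap_{n\ge 1}\mathcal{C}^n(H_D)$ lies in $H^0(X,\Omega_X(D_{\mathrm{red}}))$ and in fact coincides with the semisimple part of the Cartier operator on $H^0(X,\Omega_X(D_{\mathrm{red}}))$; hence $b(G,D,k)=b(G,D_{\mathrm{red}},k)$. By Lemma~\ref{efecDG}, $D_{\mathrm{red}}$ is a reduced $G$-invariant effective divisor whose support contains $X_{\mathrm{ram}}$: it equals $R_{\mathrm{red}}$ when $g_Y=0$, and $\pi^*(\mathrm{div}(\phi)_{\mathrm{red}})+R_{\mathrm{red}}$ (a sum over disjoint supports, since the zeroes of $\phi$ avoid $Y_{\mathrm{ram}}$) when $g_Y\ge 1$.

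Next I would apply Nakajima's theorem. By \cite[Theorem~1]{Nak:85}, for a reduced $G$-invariant effective divisor $E$ on $X$ whose support contains $X_{\mathrm{ram}}$ the semisimple part $H^0(X,\Omega_X(E))^{\mathrm s}$ of the Cartier operator is a free $k[G]$-module. (This is exactly why Lemma~\ref{efecDG} was set up to produce a divisor whose support contains $X_{\mathrm{ram}}$: without that requirement the module need not be free, as the Deuring--Shafarevich formula below already shows for $E=0$.) Taking $E=D_{\mathrm{red}}$ and using the previous paragraph, $H_D^{\mathrm s}$ is free, and its rank is $b(G,D,k)=\frac1{|G|}\dim_k H^0(X,\Omega_X(D_{\mathrm{red}}))^{\mathrm s}$.

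It remains to compute this dimension. For any reduced effective divisor $E$ of positive degree one has $\dim_k H^0(X,\Omega_X(E))^{\mathrm s}=\gamma_X+\deg(E)-1$: the Cartier operator is bijective on the quotient $H^0(X,\Omega_X(E))/H^0(X,\Omega_X)$, on which it acts through the bijective semilinear map $x\mapsto x^{1/p}$ induced on residues, so the nilpotent part of $H^0(X,\Omega_X(E))$ equals that of $H^0(X,\Omega_X)$, which has dimension $g_X-\gamma_X$, and therefore $\dim_k H^0(X,\Omega_X(E))^{\mathrm s}=(g_X-1+\deg E)-(g_X-\gamma_X)$. (Here $\deg(D_{\mathrm{red}})\ge|X_{\mathrm{ram}}|\ge 1$ since $\pi$ is ramified.) Now $\deg(D_{\mathrm{red}})=|X_{\mathrm{ram}}|$ if $g_Y=0$ and $\deg(D_{\mathrm{red}})=|G|\deg(\mathrm{div}(\phi)_{\mathrm{red}})+|X_{\mathrm{ram}}|$ if $g_Y\ge 1$, while the Deuring--Shafarevich formula $\gamma_X-1=|G|(\gamma_Y-1)+\sum_{P\in X_{\mathrm{ram}}}(e_0(P)-1)$, combined with $\sum_{P\in X_{\mathrm{ram}}}e_0(P)=r|G|$, gives $\gamma_X=1+|G|(\gamma_Y-1+r)-|X_{\mathrm{ram}}|$. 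Substituting, $\gamma_X+\deg(D_{\mathrm{red}})-1$ equals $|G|(\gamma_Y-1+r)$ when $g_Y=0$ and $|G|(\gamma_Y-1+r+\deg(\mathrm{div}(\phi)_{\mathrm{red}}))$ when $g_Y\ge 1$, and dividing by $|G|$ gives the asserted value of $b(G,D,k)$ (with $\gamma_Y=0$ in the first case). Alternatively, once freeness is known the rank can be read off directly from the equivariant Deuring--Shafarevich formula, the relevant reduced divisor on $Y$ being $Y_{\mathrm{ram}}$ (resp.\ $Y_{\mathrm{ram}}+\mathrm{div}(\phi)_{\mathrm{red}}$).

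The main obstacle is not really computational: it is to have Nakajima's theorem available in the precise form used above and to check that its hypotheses hold, which is the whole reason for the somewhat elaborate construction of $D$ in Lemma~\ref{efecDG}. The only other point requiring care is the reduction in the first paragraph --- one must notice that, because of the ceiling in the Cartier pole estimate, $H_D^{\mathrm s}$ depends on $D$ only through $D_{\mathrm{red}}$; this is also what makes the degree count above come out to an exact multiple of $|G|$, as it must if the module is to be free.
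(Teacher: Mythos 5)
Your proof is correct and follows essentially the same route as the paper: reduce to $D_{\mathrm{red}}$ (the paper simply cites \cite[Lemma~2.5]{Subrao} where you argue directly with the Cartier pole estimate) and then apply Nakajima's theorem \cite[Theorem~1]{Nak:85} to the reduced $G$-invariant divisor whose support is $\pi^{-1}(Y_{\mathrm{ram}})$, resp.\ $\pi^{-1}(Y_{\mathrm{ram}}\sqcup \mathrm{supp}(\mathrm{div}\,\phi))$. The only cosmetic difference is that the paper reads the rank $\gamma_Y-1+|S|$ directly off the statement of Nakajima's equivariant Deuring--Shafarevich formula, whereas you rederive it from freeness plus the non-equivariant count $\dim_k H^0(X,\Omega_X(E))^{\mathrm{s}}=\gamma_X-1+\deg E$ and the ordinary Deuring--Shafarevich formula; your bookkeeping is correct.
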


\begin{proof}
By \cite[Lemma~2.5]{Subrao}, the $k[G]$-module $H_D^\mathrm{s}$
is the same as $H_{D_\mathrm{red}}^\mathrm{s}$. Furthermore, if
$S$ is any nonempty set of $Y$ containing $Y_\mathrm{ram}$ and
$E$ is the effective reduced $G$-invariant divisor on $X$
corresponding to $\pi^{-1}(S)$, then the semisimple part of
$H^0(X, \Omega_X(E))$ is a free $k[G]$-module of rank $\gamma_Y
-1 + |S|$ by a theorem of Nakajima \cite[Theorem~1]{Nak:85}. As
\[D_\mathrm{red} =
\begin{cases}
R_\mathrm{red} & \textrm{if } g_Y=0\\
\mathrm{div}(\pi^* \phi)_\mathrm{red} =
(\pi^* \mathrm{div}(\phi))_\mathrm{red} + R_\mathrm{red} & \textrm{if } g_Y \ge 1,
\end{cases}\]
the divisor $D_\mathrm{red}$ corresponds to
\[\begin{cases}
X_\mathrm{ram} = \pi^{-1}(Y_\mathrm{ram}) & \textrm{if } g_Y=0\\
\pi^{-1}(\mathrm{supp}(\mathrm{div}(\phi))) \sqcup X_\mathrm{ram}=
\pi^{-1}(\mathrm{supp}(\mathrm{div}(\phi)) \sqcup Y_\mathrm{ram}) & \textrm{if } g_Y \ge 1
\end{cases}\]
and Theorem~\ref{semisimple} follows from
\cite[Theorem~1]{Nak:85}.
\end{proof}

Theorem~\ref{semisimple} has the following possible
applications. Firstly, if $\dim_k (H^\mathrm{n}_D)_G$ and, in
the case $g_Y \ge 1$, also
$\mathrm{deg}(\mathrm{div}(\phi)_\mathrm{red})$ can be
computed, we can also compute the dimension $\dim_k H^1(G,
\cT_X) = \dim_k H^0(X, \Omega_X^{\otimes 2})_G$ of the tangent
space of the deformation functor associated with $G$ acting on
$X$:

\begin{cor}\label{corollary1}
 Let $D$ be as in Theorem~\ref{semisimple}. Then we have
\begin{align*}\dim_k &H^0(X, \Omega_X^{\otimes 2})_G\\
& =
 \begin{cases}
  \dim_k (H^\mathrm{n}_D)_G + \gamma_Y - 1 +r & \textrm{if } g_Y=0\\
  \dim_k (H^\mathrm{n}_D)_G + \gamma_Y - 1 +r + \mathrm{deg}(\mathrm{div}(\phi)_\mathrm{red})
  & \textrm{if } g_Y \ge 1.
 \end{cases}
\end{align*}
\end{cor}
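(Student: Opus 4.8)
The plan is to take $G$-coinvariants of the Cartier-operator decomposition $H_D = H_D^{\mathrm s} \oplus H_D^{\mathrm n}$ and to feed in the rank of the free $k[G]$-module $H_D^{\mathrm s}$ computed in Theorem~\ref{semisimple}. Since essentially all the work is already done, this will be short.

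First I would recall that, $D$ being a canonical divisor, there is a $k[G]$-iso\-morphism $H^0(X, \Omega_X^{\otimes 2}) \cong H^0(X, \Omega_X(D)) = H_D$, so it suffices to compute $\dim_k (H_D)_G$. As $D$ is effective, Section~\ref{pranksection} provides the $k[G]$-module decomposition $H_D = H_D^{\mathrm s} \oplus H_D^{\mathrm n}$. The coinvariants functor $M \mapsto M_G = M \otimes_{k[G]} k$ commutes with finite direct sums, whence
\[(H_D)_G \cong (H_D^{\mathrm s})_G \oplus (H_D^{\mathrm n})_G\]
and therefore $\dim_k (H_D)_G = \dim_k (H_D^{\mathrm s})_G + \dim_k (H_D^{\mathrm n})_G$.

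Next I would invoke Theorem~\ref{semisimple}: for $D$ as in Lemma~\ref{efecDG}, the module $H_D^{\mathrm s}$ is free over $k[G]$ of rank $b(G,D,k)$. Since $(k[G])_G \cong k$ is one-dimensional (the trivial representation being the only irreducible $k[G]$-module, as noted in Section~\ref{pranksection}), we get $\dim_k (H_D^{\mathrm s})_G = b(G,D,k)$. Substituting the explicit value of $b(G,D,k)$ from Theorem~\ref{semisimple} — namely $\gamma_Y - 1 + r$ when $g_Y = 0$ and $\gamma_Y - 1 + r + \deg(\mathrm{div}(\phi)_{\mathrm{red}})$ when $g_Y \ge 1$ — then yields the two cases of the asserted formula.

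There is no real obstacle here; the entire substance has been absorbed into Theorem~\ref{semisimple}, and the corollary only combines additivity of coinvariants over direct sums with the triviality of $\dim_k (k[G])_G$. The one point worth flagging explicitly is that the splitting $H_D = H_D^{\mathrm s} \oplus H_D^{\mathrm n}$, and hence $\dim_k (H_D^{\mathrm n})_G$ and the whole formula, depends on the chosen divisor $D$ (and on $\phi$) rather than merely on the canonical class — which is exactly why the statement is phrased in terms of a fixed $D$ as in Lemma~\ref{efecDG}.
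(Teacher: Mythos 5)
Your proposal is correct and is precisely the argument the paper leaves implicit (its proof is simply ``Obvious''): take $G$-coinvariants of $H_D = H_D^{\mathrm s} \oplus H_D^{\mathrm n}$, use that coinvariants are additive over direct sums, and note that $H_D^{\mathrm s}$ is free of rank $b(G,D,k)$ with $\dim_k (k[G])_G = 1$. Nothing further is needed.
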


\begin{proof}
 Obvious.
\end{proof}

For the second application we first introduce the following
definition. If a $k[G]$-module $M$ is isomorphic to a direct
sum $\bigoplus_{i=1}^l M_i ^{\oplus m_i}$ for some pairwise
non-isomorphic indecomposable $k[G]$-modules $M_1, \ldots, M_l$
and some natural numbers $m_1, \ldots, m_l$, we call $m_i$ the
multiplicity of $M_i$ in $M$. In cases the multiplicity
$m_{k[G]}$ of the regular representation $k[G]$ in the
$k[G]$-module $H^0(X, \Omega_X^{\otimes 2})$ is known, we can
also compute the multiplicity $n_{k[G]}$ of $k[G]$ in the
nilpotent part $H_D^\mathrm{n}$:

\begin{cor}\label{directsummand}
Let $D$ be as in Theorem~\ref{semisimple}. Then we have
\[n_{k[G]} = m_{k[G]} -
\begin{cases}
\gamma_Y - 1 + r & \textrm{if } g_Y=0\\
\gamma_Y - 1 + r + \mathrm{deg}(\mathrm{div}(\phi)_\mathrm{red}) &
\textrm{if } g_Y \ge 1.
\end{cases}\]
\end{cor}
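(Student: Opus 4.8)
The plan is to combine the decomposition of $H_D$ into semisimple and nilpotent parts with the multiplicity computation from Theorem~\ref{semisimple}. We start from the $k[G]$-module decomposition
\[H^0(X, \Omega_X^{\otimes 2}) \cong H_D \cong H_D^\mathrm{s} \oplus H_D^\mathrm{n},\]
which is a decomposition of $k[G]$-modules since $D$ is $G$-invariant. Because the multiplicity of the indecomposable $k[G]$-module $k[G]$ in a direct sum is additive over direct summands, we get
\[m_{k[G]} = (\text{multiplicity of } k[G] \text{ in } H_D^\mathrm{s}) + n_{k[G]}.\]

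Next I would invoke Theorem~\ref{semisimple}, which tells us that $H_D^\mathrm{s}$ is a \emph{free} $k[G]$-module of rank $b(G,D,k)$, where $b(G,D,k)$ equals $\gamma_Y - 1 + r$ if $g_Y = 0$ and $\gamma_Y - 1 + r + \mathrm{deg}(\mathrm{div}(\phi)_\mathrm{red})$ if $g_Y \ge 1$. Since a free $k[G]$-module of rank $b$ is just $k[G]^{\oplus b}$, the multiplicity of the indecomposable module $k[G]$ in $H_D^\mathrm{s}$ is exactly $b(G,D,k)$. (Here one uses that $k[G]$ is itself indecomposable — equivalently that $G$ is a $p$-group, so $k[G]$ is a local ring — so no further copies of $k[G]$ can hide inside a sum of smaller indecomposables; but in fact $H_D^\mathrm{s}$ has \emph{only} summands isomorphic to $k[G]$, so this subtlety does not even arise.) Substituting into the additivity relation and solving for $n_{k[G]}$ gives exactly the claimed formula.

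The only point requiring a word of care is that the notion of multiplicity $m_{k[G]}$ used in the corollary is the one from the definition introduced just before the statement (multiplicity of an indecomposable summand in a direct-sum decomposition into indecomposables), and that this is well-defined and additive; both follow from the Krull–Schmidt theorem for finitely generated modules over the Artinian ring $k[G]$. There is essentially no real obstacle here — the corollary is a formal consequence of Theorem~\ref{semisimple} together with the Cartier decomposition and Krull–Schmidt — which is why the authors' proof of the companion Corollary~\ref{corollary1} reads simply ``Obvious.''; I expect the proof of Corollary~\ref{directsummand} to be of the same one-line nature, perhaps with a pointer to Theorem~\ref{semisimple} and the additivity of multiplicities.
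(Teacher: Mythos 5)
Your argument is correct and is exactly the intended one: the paper's own proof is simply ``Obvious,'' relying on the Cartier decomposition $H_D = H_D^\mathrm{s}\oplus H_D^\mathrm{n}$, the freeness and rank formula from Theorem~\ref{semisimple}, and Krull--Schmidt additivity of multiplicities, just as you spell out. No further comment is needed.
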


\begin{proof}
Obvious.
\end{proof}

A formula for $m_{k[G]}$ is known if $G$ is cyclic
\cite[Theorem~7.23]{Borne06} or if G is elementary abelian \cite[Section~3]{karan}. For
instance, if $G$ is cyclic of order $p$, we can derive from
\cite[Theorem~1]{Nak:86} that
\[m_{k[G]} = 3g_Y-3 + \sum_{j=1}^r \left\lfloor
\frac{(N_j+2)(p-1)}{p}\right\rfloor \] where $N_j$ is the
highest (and single) jump in the lower ramification filtration of~$G(P_j)$.

\section{Appendix}\label{account}

This appendix gives an account of a structure theorem for
finite weakly ramified Galois extensions of local fields of
positive characteristic $p$ in the case the Galois group is a
$p$-group, see Proposition~\ref{structuretheorem} below. This
structure theorem is used at the end of the proof of
Proposition~\ref{projective} in the main part of this paper,
but only in the case $p=2$. It appears as (part of)
Proposition~1.4 in ~\cite{CK} and Proposition~1.18 in
\cite{CK2}; in this appendix we
give a self-contained and elementary proof. It
implies a very explicit and simple description of the action of
the Galois group on a local parameter, see
Proposition~\ref{structuretheorem}. In the situation of
Section~\ref{sec3} of the main part of this paper we finally
derive a certain feature of pole numbers, if $p=2$, see
Corollary~\ref{Weierstrass}.

Let $K$ be a local field of characteristic $p > 0$; i.e.\ the
field $K$ is complete with respect to a discrete valuation
$v_K: K^\times \twoheadrightarrow \ZZ$ and its residue field
$k$ is perfect; we assume that $k$ is contained in $K$.


\begin{lemma}\label{cyclicextension}
Let $L/K$ be a totally ramified Galois extension of degree $p$.
Then there exists an element $y \in L$ whose valuation is
coprime to $p$ and negative, say $-m$, such that $y^p-y \in K$
and $L=K(y)$. The greatest integer~$M$ such that the higher
ramification group $G_M$ of $L/K$ does not vanish is then equal
to $m$. 
\end{lemma}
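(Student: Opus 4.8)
The plan is to combine Artin--Schreier theory with a direct valuation computation. Since $\mathrm{char}(K)=p$ and $L/K$ is cyclic of degree~$p$, Artin--Schreier theory gives $L=K(z)$ with $z^p-z=a$ for some $a\in K$, the class of $a$ modulo $\wp(K)$ (where $\wp(x):=x^p-x$) being determined up to $\FF_p^{\times}$-scaling. I would first normalize $a$ within its class. If $v_K(a)\ge 0$, i.e.\ $a\in\mathcal{O}_K$, then reducing $X^p-X-a$ modulo the maximal ideal $\mathfrak{m}$ and applying Hensel's lemma shows that $L/K$ is either trivial or unramified of degree~$p$, contradicting that it is totally ramified of degree~$p$; hence $v_K(a)<0$. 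If $v_K(a)=-m$ with $p\mid m$, write $a=u\,t^{-m}+(\text{higher-order terms})$ for a uniformizer $t$ of $K$ and a unit $u\in\mathcal{O}_K^{\times}$; as $k$ is perfect there is $c\in\mathcal{O}_K^{\times}$ with $c^{p}\equiv u\pmod{\mathfrak{m}}$, and replacing $a$ by $a-\wp(c\,t^{-m/p})$ --- which leaves the class, hence $L$, unchanged --- strictly raises $v_K(a)$. Since $v_K(a)$ is a negative integer strictly increasing at each step, after finitely many steps we land in the case $v_K(a)=-m$ with $m>0$ and $\gcd(m,p)=1$ (the escape into $v_K(a)\ge 0$ being excluded as above). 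For this final $a$ put $y:=z$; then $y^p-y=a\in K$, and since $y\notin K$ while $[L:K]=p$ is prime, $L=K(y)$.

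Next I would compute $v_L(y)$ and then identify $M$. As $L/K$ is totally ramified of degree~$p$, the restriction of $v_L$ to $K$ is $p\,v_K$, so $v_L(a)=-pm$; if $v_L(y)\ge 0$ then $v_L(y^p-y)\ge 0>-pm$, impossible, so $v_L(y)<0$ and $v_L(y^p-y)=p\,v_L(y)$, forcing $v_L(y)=-m$, which is negative and coprime to $p$, as required. For the ramification statement, note that for $\sigma\in G:=\Gal(L/K)$ we have $\sigma(y)^p-\sigma(y)=\sigma(a)=a=y^p-y$, hence $(\sigma(y)-y)^p=\sigma(y)-y$ and $c_\sigma:=\sigma(y)-y\in\FF_p$; the map $\sigma\mapsto c_\sigma$ is an isomorphism $G\cong\FF_p$. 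Choose $u\in\{1,\ldots,p-1\}$ with $um\equiv-1\pmod{p}$ and an integer $w$ with $-um+wp=1$, so $\pi_L:=y^{u}t^{w}$ satisfies $v_L(\pi_L)=1$; since $L/K$ is totally ramified, $\mathcal{O}_L=\mathcal{O}_K[\pi_L]$, so the lower ramification filtration is read off from $i_G(\sigma)=v_L(\sigma(\pi_L)-\pi_L)$. For $\sigma\ne 1$ we have $\sigma(\pi_L)-\pi_L=t^{w}\bigl((y+c_\sigma)^{u}-y^{u}\bigr)$, and since $v_L(y)=-m<0$ the binomial expansion has its term of least valuation at $u\,c_\sigma\,y^{u-1}$, which is nonzero because $p\nmid u$ and $c_\sigma\ne 0$; hence $v_L(\sigma(\pi_L)-\pi_L)=pw+(u-1)(-m)=(-um+pw)+m=m+1$. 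Thus $G_i=G$ for $i\le m$ and $G_i=\{1\}$ for $i>m$, so the greatest $M$ with $G_M\ne\{1\}$ is $M=m$.

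I expect the normalization in the first paragraph to demand the most care: one must check that the valuation genuinely strictly increases at each step and that the loop cannot terminate with $v_K(a)\ge 0$ --- this last point is precisely where the hypothesis that $L/K$ is \emph{totally} ramified of degree~$p$ is used. The binomial expansion and the Artin--Schreier character computation are routine, but one should remember to invoke $\mathcal{O}_L=\mathcal{O}_K[\pi_L]$ (legitimate since the residue extension is trivial, hence separable), so that evaluating $i_G$ on a single uniformizer indeed yields the ramification breaks.
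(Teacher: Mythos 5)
Your argument is correct and follows essentially the same route as the paper: produce an Artin--Schreier generator, use perfectness of the residue field to push the valuation of $a$ up until it is negative and prime to $p$ (ruling out $v_K(a)\ge 0$ via total ramification), and then read off the unique ramification break by expanding $\sigma(\pi_L)-\pi_L$ for a uniformizer built from $y$ and a uniformizer of $K$. The only cosmetic differences are that the paper excludes $v_K(a)\ge 0$ by a direct valuation contradiction rather than Hensel's lemma, and writes the uniformizer as $s^r y^{-l}$ (geometric series) instead of $y^u t^w$ (binomial expansion).
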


\begin{proof}
Let $\sigma$ be a generator of the Galois group
$G=\mathrm{Gal}(L/K)$. By the classical Artin-Schreier Theorem
there exist elements $x \in K$ and $y \in L$ such that $L=K(y)$
and $y^p-y = x$. We first show that the valuation $v_K(x)$ of
$x$ is negative and that $v_L(y) = v_K(x)$. Suppose that
$v_K(x) \ge 0$. Then also $v_L(y) \ge 0$ and, denoting the
residue classes of $y$ and $x$ in $k$ by $\bar{y}$ and
$\bar{x}$, respectively, we obtain
\[(y-\bar{y})^p - (y - \bar{y}) = (y^p -y) - \overline{y^p-y} =
x - \bar{x}.\] In particular we have $\sigma(y-\bar{y}) =
(y-\bar{y}) +c$ for some $c \in \FF_p^\times$ and
\[0 < v_L(y-\bar{y}) = v_L(\sigma(y-\bar{y})) = v_L((y-\bar{y})
+c) = 0,\] which is a contradiction. Hence we have $v_K(x) <0$,
$v_L(y) <0$ and
\[pv_L(y) = v_L(y^p) = v_L(y^p-y) = v_L(x) = p v_K(x),\]
i.e.\ $v_L(y) = v_K(x)$. Let $m:= - v_L(y) = - v_K(x)$. If $p$
divides $m$, i.e.\ $m=lp$ for some $l \in \NN$, we write
\[x= \frac{u_0}{s^{lp}} + \frac{u_1}{s^{lp-1}} +
\frac{u_2}{s^{lp-2}} + \ldots\] with some $u_0, u_1, u_2,
\ldots \in k$ and some local parameter $s$ of $K$. Furthermore,
as $k$ is perfect, there is a $v_0 \in k$ such that $v_0^p =
u_0$. We now consider
\[\tilde{y} := y - \frac{v_0}{s^l} \in L \quad \textrm{and}
\quad \tilde{x} := x - \frac{v_0^p}{s^{lp}} + \frac{v_0}{s^l}
\in K.\] Then we have $L=K(\tilde{y})$ and
\[\tilde{y}^p - \tilde{y} = \left(y-\frac{v_0}{s^l}\right)^p - \left(y -
\frac{v_0}{s^l}\right) = (y^p - y) - \frac{v_0^p}{s^{lp}} +
\frac{v_0}{s^l} = \tilde{x}.\] As above we obtain
$v_L(\tilde{y}) = v_K(\tilde{x}) < 0$; furthermore we have
\begin{align*}
v_K(\tilde{x}) = v_K&\left(x-\frac{v_0^p}{s^{lp}} +
\frac{v_0}{s^l}\right) \\&= v_K \left(\left(\frac{u_1}{s^{lp-1}} +
\frac{u_2}{s^{lp-2}} + \ldots\right) + \frac{v_0}{s^l}\right)
> -lp = -m = v_K(x).
\end{align*}
Continuing this way (if necessary) we obtain $x \in K$ and $y
\in L$ such $L=K(y)$, $y^p-y=x$ and $p$ does not divide $m = -
v_L(y) = - v_K(x) >0$, as claimed.\\
Let $r, l \in \ZZ$ such that $rp +lm=1$. Let $s$ be a local
parameter of $K$ and put $t:= s^r y^{-l} \in L$. Then we have
$v_L(t) = rp + (-l)(-m) = 1$, i.e.\ $t$ is a local parameter of
$L$. Furthermore we have
\[\sigma(t) = \sigma(s^r y^{-l}) = s^r \sigma(y)^{-l} = s^r
(y-c)^{-l} \quad \textrm{for some } c \in \FF_p^\times, \] hence
\[\sigma(t) = s^r \left(\frac{y^{-1}}{1-cy^{-1}}\right)^l = t(1+ lcy^{-1} +
\ldots)\] and finally
\[v_L(\sigma(t) - t) = v_L(t(lcy^{-1})) = 1 +m,\]
as was to be shown.
\end{proof}

\begin{prop}\label{structuretheorem}
Let $L/K$ be a finite weakly ramified Galois extension of local
fields such that the residue field $k$ of $K$ is algebraically
closed and the Galois group $G$ is a $p$-group. Then there
exist $t_0 \in K$, $t_1, \ldots, t_n \in L$ and $c_1, \ldots,
c_{n-1} \in k^\times$ such that $L=K(t_1, \ldots, t_n)$, $t:=
t_n$ is a local parameter of~$L$, $t_1^{-p} - t_1^{-1} =
t_0^{-1}$ and $t_i^{-p} - t_i^{-1} = c_{i-1} t_{i-1}^{-1}$ for
$i=2, \ldots, n$. Furthermore for each $g \in G$ there is an $a
\in k$ (depending on $g$) such that
\[g(t) = \frac{t}{1-at} = t(1+at + a^2 t^2 + \ldots).\]
\end{prop}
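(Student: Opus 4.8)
The plan is to argue by induction on the order of $G$, using Lemma~\ref{cyclicextension} as the base case and to split off a subextension of degree $p$. First I would dispose of the rank-one case: if $|G|=p$, then $L/K$ is cyclic of degree $p$, and Lemma~\ref{cyclicextension} produces $y \in L$ with $v_L(y) = -m$, $p \nmid m$, $y^p - y \in K$, and $L = K(y)$; moreover, since $L/K$ is weakly ramified, the highest ramification break $M$ equals $1$, so Lemma~\ref{cyclicextension} forces $m = 1$. Thus $t := y^{-1}$ is a local parameter of $L$, and setting $t_1 := t$, $t_0 := (y^p - y)^{-1} \in K$ we get $t_1^{-p} - t_1^{-1} = t_0^{-1}$. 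For the action, write $\sigma(y) = y - c$ with $c \in \FF_p^\times$ for a generator $\sigma$; then $\sigma(t) = (y-c)^{-1} = t/(1 - ct)$, and every $g \in G$ is a power of $\sigma$, giving $g(t) = t/(1-at)$ with $a$ the corresponding element of $\FF_p \subseteq k$ (one checks $t/(1-at) \mapsto t/(1-(a+c)t)$ under one more application of $\sigma$, so the additive parameter composes correctly).

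For the inductive step, I would choose a normal subgroup $H \trianglelefteq G$ with $[G:H] = p$ (possible since $G$ is a $p$-group), and let $L^H$ be the corresponding intermediate field. Then $L/L^H$ is weakly ramified with group $H$ — weak ramification passes to subextensions since the ramification filtration of $H$ is the restriction of that of $G$ — and $L^H/K$ is a degree-$p$ subextension. The subtlety is that $L^H/K$ need not a priori be weakly ramified, but in fact it is: the ramification break of $L^H/K$ in the upper numbering is at most the top upper break of $L/K$, which is $1$ by weak ramification of $L/K$ (here one uses that the residue field is algebraically closed so the extension is totally ramified and Herbrand's theorem applies cleanly). Hence Lemma~\ref{cyclicextension} applies to $L^H/K$, yielding $y \in L^H$ with $v_{L^H}(y) = -1$, $y^p - y \in K$, and $L^H = K(y)$. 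Applying the inductive hypothesis to $L/L^H$ gives $t_1, \ldots, t_{n-1} \in L$ (relative to the base $L^H$) — wait, re-indexing: the inductive hypothesis produces elements playing the role of $t_1, \dots, t_{n-1}$ over $L^H$, and I would rename so that the first Artin–Schreier relation over $L^H$ reads $t_2^{-p} - t_2^{-1} = c_1 \cdot (\text{local parameter of } L^H)^{-1}$. Setting $t_1 := y^{-1}$ (a local parameter of $L^H$, since the break is $1$), $t_0 := (y^p - y)^{-1}$, and absorbing the unit relating $y^{-1}$ to whatever local parameter of $L^H$ the inductive hypothesis used into the constant $c_1 \in k^\times$, we obtain the full chain of relations $t_1^{-p} - t_1^{-1} = t_0^{-1}$ and $t_i^{-p} - t_i^{-1} = c_{i-1} t_{i-1}^{-1}$ for $i = 2, \dots, n$, with $L = K(t_1, \dots, t_n)$ and $t := t_n$ a local parameter of $L$.

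It remains to establish the formula for the Galois action on $t = t_n$, and this is where I expect the real work to lie. The inductive hypothesis gives, for each $h \in H$, an element $a_h \in k$ with $h(t) = t/(1 - a_h t)$. For a general $g \in G$, write $g = h \sigma^j$ where $\sigma \in G$ is a lift of a generator of $G/H$ and $h \in H$; one needs to understand $\sigma(t)$. The key computation is that $\sigma$ acts on the tower compatibly: $\sigma(t_1^{-1}) = t_1^{-1} + c$ for some $c \in \FF_p^\times$ (from the base case over $K$, since $t_1^{-p} - t_1^{-1} = t_0^{-1} \in K$ is $\sigma$-fixed), and then one must propagate this up through the Artin–Schreier relations $t_i^{-p} - t_i^{-1} = c_{i-1} t_{i-1}^{-1}$ to deduce that $\sigma(t_i^{-1}) = t_i^{-1} + (\text{lower-order terms in } t_i^{-1})$, ultimately of the shape $\sigma(t) = t/(1 - a_\sigma t)$ for some $a_\sigma \in k$. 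The main obstacle is checking that this Möbius-type form $t \mapsto t/(1-at)$ is genuinely preserved under composition — i.e. that the set $\{t \mapsto t/(1-at) : a \in k\}$ is closed under composition with group law $a, a' \mapsto a + a'$ (which is immediate: $\frac{t/(1-a't)}{1 - a \cdot t/(1-a't)} = \frac{t}{1-(a+a')t}$) — and, more delicately, that $\sigma$ really does act in this form rather than merely modulo higher powers of $t$. For this last point one exploits weak ramification crucially: $g(t) - t$ has valuation exactly $2$ (the break is $1$) for every non-identity $g$, which pins down the action to first order, and then an inductive argument on the valuation of $g(t) - (t/(1-at))$, using that the difference lies in the fixed field of larger and larger ramification groups (all of which are trivial beyond level $1$), forces the two to coincide exactly. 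I would carry out this valuation-increasing argument as the technical heart of the proof.
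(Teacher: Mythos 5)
Your overall strategy---induction through degree-$p$ pieces, Lemma~\ref{cyclicextension} for each piece, Herbrand's theorem to see that weak ramification passes to sub- and quotient extensions, and the break $m=1$ forcing the Artin--Schreier generators to be inverses of local parameters---is essentially the paper's, but there are two genuine gaps. The first is at the splice. You peel off the degree-$p$ piece at the \emph{bottom} ($L^H/K$) and apply the inductive hypothesis to $L/L^H$; that hypothesis hands you a relation $t_2^{-p}-t_2^{-1}=t_0'^{-1}$ for \emph{some} $t_0'\in L^H$, whereas to join the two towers you need $t_2^{-p}-t_2^{-1}=c_1t_1^{-1}$ with $c_1\in k^\times$. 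Writing $t_0'^{-1}=u\,t_1^{-1}$ with $u\in\cO_{L^H}^\times$, you propose to ``absorb $u$ into $c_1$''; but $u$ is a unit of the valuation ring, not an element of $k^\times$, so this is not legitimate. The actual fix is to write $u\,t_1^{-1}=c_1t_1^{-1}+e$ with $c_1=\bar{u}\in k^\times$ and $e\in\cO_{L^H}$, solve $f^p-f=e$ by Hensel's lemma ($k$ is algebraically closed), and replace $t_2^{-1}$ by $t_2^{-1}-f$ --- after which the perturbation must be propagated through every higher relation $t_i^{-p}-t_i^{-1}=c_{i-1}t_{i-1}^{-1}$ of the tower. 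The paper sidesteps all of this by peeling off the degree-$p$ piece at the \emph{top}: since $G$ is elementary abelian (a consequence of weak ramification), it writes $G=C_1\times\cdots\times C_n$, applies the induction to $L^{C_n}/K$ and Lemma~\ref{cyclicextension} to $L/L^{C_n}$, so the Hensel adjustment is needed exactly once, on the final element $t_n$, and disturbs nothing below it.

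The second gap is the Galois action. Your ``valuation-increasing argument'' --- that $g(t)-t/(1-at)$ is forced to have ever larger valuation because the difference ``lies in the fixed field of larger ramification groups'' --- is not a proof; triviality of $G_2$ pins down $g(t)$ only modulo $(t^3)$ and supplies no mechanism for improving the congruence. What actually works (and is what the paper does) is to apply $g$ to the Artin--Schreier relations themselves: since $t_0\in K$ is $g$-fixed, induction up the chain gives $g(t_{i-1}^{-1})=t_{i-1}^{-1}-b$ with $b\in k$, whence $g(t_i^{-1})^p-g(t_i^{-1})=(t_i^{-p}-t_i^{-1})-c_{i-1}b$, so $g(t_i^{-1})-t_i^{-1}$ is a root of $T^p-T+c_{i-1}b$ and therefore lies in $k$, because all roots of that polynomial lie in the algebraically closed field $k\subset L$. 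This yields $g(t^{-1})=t^{-1}-a$ exactly, i.e.\ $g(t)=t/(1-at)$, for every $g\in G$ at once, with no valuation estimates and no need to treat $H$ and a lift of a generator of $G/H$ separately (which your set-up would otherwise require, and which interacts badly with the Hensel adjustments from the first gap).
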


\begin{proof}
As $L/K$ is weakly ramified, the Galois group $G$ is in fact an
elementary abelian $p$-group, i.e.\ $G= C_1 \times \ldots
\times C_n$ for some subgroups $C_1, \ldots, C_n$ of $G$ of
order $p$. We proceed by induction on $n$. If $n=0$ there is
nothing to prove. So let $n \ge 1$. We write just $C$ for
$C_n$. By the inductive hypothesis there exist $c_1, \ldots,
c_{n-2} \in k^\times$, $t_0 \in K$ and $t_1, \ldots, t_{n-1}$
in the fixed field $L^C$  such that $L^C = K(t_1, \ldots,
t_{n-1})$, the element $t_{n-1}$ is a local parameter of $L^C$,
$t_1^{-p} - t_1^{-1} = t_0$ and $t_i^{-p} - t_i^{-1} = c_{i-1}
t_{i-1}^{-1}$ for $i=2, \ldots, n-1$. By
Lemma~\ref{cyclicextension} there exist local parameters $s$
and $t$ of $L^C$ and $L$, respectively, such that $L=L^C(t)$
and $t^{-p} - t^{-1} = s^{-1}$. If $n=1$ we redefine $t_0$ to
be equal to $s$ and put $t_1:= t$. If $n >1$ we write $s^{-1} =
c_{n-1} t_{n-1}^{-1} + e$ for some $c_{n-1} \in k^\times$ and
some $e$ in the valuation ring $\cO_{L^C}$ of $L^C$. As $k$ is
algebraically closed and $\cO_{L^C}$ is Henselian we can find
$f \in \cO_{L^C}$ such that $f^p - f = e$. We finally define
$t_n$ to be the inverse of $t^{-1} - f$ and obtain $t_n^{-p} -
t_n^{-1} = (t^{-p} - t^{-1}) - (f^p - f) = c_{n-1}
t_{n-1}^{-1}$, as desired. To prove the last assertion we fix
$g \in G$. By the inductive hypothesis there exists $b \in k$
such that $g(t_{n-1})= \frac{t_{n-1}}{1-bt_{n-1}}$, i.e.\
$g(t_{n-1}^{-1}) = t_{n-1}^{-1} - b$. Then we have
\[g(t^{-1})^p - g(t^{-1}) = g (t_n^{-p} - t_n^{-1}) = g
(c_{n-1} t_{n-1}^{-1}) = c_{n-1}t_{n-1}^{-1} - c_{n-1} b.\]
Hence we have $g(t^{-1}) = t^{-1} - a$, i.e.\ $g(t) =
\frac{t}{1-at}$, for some Artin-Schreier root~$a$ of
$c_{n-1}b$, as desired.
\end{proof}

We now return to the situation considered in the main part of
this paper. We recall that a natural number $m$ is called a
{\em pole number} at a point $P \in X$ if there exists a
meromorphic function $f$ on $X$ whose pole order at $P$ is $m$
and which is holomorphic everywhere else. The Riemann-Roch
theorem (see \cite[Theorem~IV~1.3]{Hartshorne:77}) implies that
every integer $m > 2g_X-2$ is a pole number, see also
\cite[Proposition~1.6.6]{StiBo}.

\begin{cor}\label{Weierstrass}
Suppose that $p=2$ and that $\pi: X \rightarrow Y$ is weakly
ramified. Let $P \in X_\mathrm{ram}$ such that $G(P)$ is not
cyclic. Then the smallest odd pole number $m$ at $P$ is
congruent to $1$ modulo $4$ and $m-1$ is a pole number as well.
\end{cor}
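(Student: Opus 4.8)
The plan is to translate the statement about pole numbers at $P$ into a statement about the local ramification filtration of $G(P)$ and then apply the structure theorem (Proposition~\ref{structuretheorem}). Let $L = \widehat{K(X)}_P$ be the completion of the function field of $X$ at $P$ and let $K = L^{G(P)}$; since $\pi$ is weakly ramified and $G(P)$ is a $2$-group, $L/K$ is a weakly ramified Galois extension of local fields with $G(P)$ elementary abelian. First I would recall that the pole numbers at $P$ form the Weierstrass semigroup of $X$ at $P$, and that (since $P$ is totally ramified over $\pi(P)$) every pole number divisible by $p=2$ comes from pulling back a pole number of $Y$ at $\pi(P)$, hence is even; conversely the odd pole numbers at $P$ are exactly the valuations $-v_L(f)$ of functions $f \in K(X)$ that are not, up to a function pulled back from $Y$, of even order. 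The point is that the smallest odd pole number $m$ equals $-v_L$ of the most "efficient" Artin--Schreier-type generator of $L$ over $K$.

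The key step is to use Proposition~\ref{structuretheorem}: since $G(P)$ is not cyclic we have $n \ge 2$, and there exist $t_1, \dots, t_n \in L$ with $t := t_n$ a local parameter of $L$, together with relations $t_1^{-p} - t_1^{-1} = t_0^{-1}$ (with $t_0 \in K$) and $t_i^{-p} - t_i^{-1} = c_{i-1} t_{i-1}^{-1}$ for $i = 2, \dots, n$, where $c_{i-1} \in k^\times$. Working with $p = 2$, each $t_i^{-1}$ is a function on $X$ with a pole only at $P$; I would compute $v_L(t_i^{-1})$ by descending induction. Writing $m_i := -v_L(t_i^{-1})$, the relation $t_i^{-2} - t_i^{-1} = c_{i-1} t_{i-1}^{-1}$ forces $2 m_i = m_{i-1}$ on the side of $L^{C_n}$ — more precisely, unwinding valuations through the tower, one gets that $m_i$ is odd and $m_{i-1} = 2 m_i$ once $i \ge 2$ (by the standard Artin--Schreier valuation argument used already in Lemma~\ref{cyclicextension}: the left side $t_i^{-p} - t_i^{-1}$ has valuation $p \cdot v_L(t_i)$ because $v_L(t_i) < 0$ is coprime to $p$). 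Hence $m_n = m$ is the smallest such pole number, and from $m_{n-1} = 2 m$ together with the further relation one layer down one extracts a congruence; specifically the bottom relation $t_1^{-2} - t_1^{-1} = t_0^{-1}$ with $t_0 \in K$ (so $v_K(t_0^{-1})$, pushed up to $L$, is even) gives that $m_1$ is odd, and then tracing $m_1, 2m_1 = m_2, \dots$ back up shows $m \equiv 1 \pmod 4$ when $n \ge 2$. To get the second assertion, that $m - 1$ is a pole number: I would exhibit the function $t_n^{-1} t_{n-1}^{-1} \cdot(\text{unit})$, or more simply observe that $t^{-1} s$ for a suitable local parameter $s$ of $L^{C_n}$ (equivalently a product of two of the $t_i^{-1}$'s with a correcting factor) has pole order exactly $m - 1$ at $P$ and no other poles — because $m - 1 = m - m_n/m_n$ is realized as a difference $m_n - 1$ and $-1 + v_L$ of a unit times $t_n^{-1}$ adjusted by an element of the valuation ring of $L^{C_n}$ of valuation $\ge 1$.

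The main obstacle I anticipate is the bookkeeping in the second half: carefully producing a function whose pole order at $P$ is \emph{exactly} $m - 1$ (rather than something $\le m-1$) and which is holomorphic away from $P$, i.e.\ controlling that no spurious poles appear at other points of the $G$-orbit or elsewhere on $X$. For this I would work with the functions $t_i^{-1}$ globally (they are $G$-invariant up to the tower structure and holomorphic outside $X_{\mathrm{ram}}$, but may have poles at other ramified points), and correct by a function pulled back from $Y$ that kills the unwanted poles while being a unit at $\pi(P)$; since $p > 2$ is not assumed here one must stay entirely within the weakly ramified setting where Proposition~\ref{structuretheorem} applies. The congruence $m \equiv 1 \pmod 4$ itself should fall out cleanly from the chain $m_1$ odd, $m_2 = 2 m_1$, and $m = m_n$ with $m_n$ odd and $m_{n-1} = 2 m_n$, forcing $m_n \equiv m_1 \pmod{\text{powers of }2}$ in a way that pins down $m_n \bmod 4$ once there are at least two levels.
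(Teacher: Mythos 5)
Your approach has a fundamental gap: you treat the elements $t_1,\ldots,t_n$ produced by Proposition~\ref{structuretheorem} as if they were meromorphic functions on $X$ with poles only at $P$, and you identify the quantities $m_i=-v_L(t_i^{-1})$ with pole numbers. Neither is justified. Proposition~\ref{structuretheorem} is a purely local statement about the completion $L=\widehat{K(X)}_P$; the $t_i$ live in $L$, not in $K(X)$, so $-v_L(t_i^{-1})$ is not a pole number of $X$ at $P$. Worse, these local valuations are already forced: $t_n$ is a local parameter of $L$, so $m_n=1$, and the Artin--Schreier relations give $m_{i-1}=2m_i$, hence $m_1=2^{n-1}$; in particular $m_n=1$ is certainly not the smallest odd pole number of a curve of genus $\ge 2$ (where $1$ is always a gap), and your chain ``$m_1$ odd, $m_{i-1}=2m_i$'' is internally contradictory since $m_{i-1}=2m_i$ is even. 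More conceptually, the Weierstrass semigroup at $P$ is a global invariant of $(X,P)$ and is not determined by the ramification filtration of $L/L^{G(P)}$, so no argument confined to the local tower can identify $m$, let alone compute it modulo $4$. (Your preliminary claim that every even pole number is pulled back from $Y$ is also false: pullbacks from $Y$ have pole order divisible by $e_0(P)=|G(P)|\ge 4$.)

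The actual proof is genuinely global--local. One starts from a global function $f$ of pole order exactly $m$ at $P$, uses that $m$ is odd to extract an $m$-th root $t$ of $f^{-1}$ in $\hat{\cO}_{X,P}$ as a local parameter, and expands $g(f)=g(t)^{-m}$ in two ways: locally via $g(t)\equiv t+\alpha(g)t^2+\beta(g)t^3 \bmod (t^4)$, and globally in terms of a basis $f,f_1,\ldots,f_s$ of $H^0(X,\cO_X(m[P]))$ whose remaining pole orders are all even. Comparing coefficients yields both that $m-1$ is a pole number and the identity $\beta=\frac{m+1}{2}\alpha^2+c\alpha$. Only at this point does Proposition~\ref{structuretheorem} enter, supplying the independent relation $\beta=\alpha^2$; since $G(P)$ is non-cyclic, the injective homomorphism $\alpha$ takes more than two values, so the quadratic relation forces $\frac{m+1}{2}\equiv 1 \pmod 2$, i.e.\ $m\equiv 1\pmod 4$. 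The comparison with a basis of the Riemann--Roch space is the essential ingredient missing from your proposal and cannot be avoided.
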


The following proof is a refinement of some arguments given in
\cite{Ko:2008}.

\begin{proof}
Let $\hat{\cO}_{X,P}$ denote the completion of the local ring
$\cO_{X,P}$ at $P$. Let $t \in \hat{\cO}_{X,P}$ be a local
parameter at $P$ and let the maps $\alpha$ and $\beta$ from
$G(P)$ to $k$ be defined by the congruences
\[g(t) \equiv t + \alpha(g) t^2 + \beta(g) t^3 \quad
\mathrm{mod} \quad (t^4),\] $g \in G(P)$. We recall that $\alpha$ is
a monomorphism, see the proof of
Proposition~\ref{grouphomology}. We first observe that changing
the local parameter $t$ by multiplication with a unit $u \in
\hat{\cO}_{X,P}^\times$ amounts to multiplying $\alpha$ and
$\beta$ with $\bar{u}^{-1}$ and $\bar{u}^{-2}$, respectively,
where $\bar{u}$ denotes the residue class of $u$ in $k^\times$.
From Proposition~\ref{structuretheorem} we therefore conclude
that $\beta = \alpha^2$ no matter which local parameter $t$ we
choose. We will prove below that we can choose $t$ in such a
way that $\beta = \frac{m+1}{2} \alpha^2 + c\alpha$ for some
constant $c \in k$. As $G(P)$ is not cyclic we therefore obtain
$\frac{m+1}{2} = 1$ in $\FF_2$ (and $c=0$), i.e.\ $m$ is
congruent to $1$ modulo $4$, as stated. \\
Let $f$ be a function in $H^0(X,\cO_X(m[P]))$ whose pole order
at $P$ is $m$. Then the function $f^{-1}$ has a zero at $P$ of
order $m$. As $\hat{\cO}_{X,P}$ is Henselian and $m$ is odd
there exists an $m^\textrm{th}$ root $t$ of $f^{-1}$ in
$\hat{\cO}_{X,P}$. Obviously, $t$ is a local parameter at $P$.
Let $m >m_1> \ldots
> m_s =0$ be the sequence of pole numbers at $P$ smaller than
$m$. Note that $m_1, \ldots, m_s$ are even by assumption. Let
$f_1, f_2, \ldots, f_s$ be functions in $H^0(X,\cO_X(m[P]))$ of
pole orders $m_1, \ldots, m_s$, respectively. Then there exist
some units $u_1, \ldots, u_s \in \hat{\cO}_{X,P}^\times$ such
that $f_1 = \frac{u_1}{t^{m_1}}, \ldots, f_s=
\frac{u_s}{t^{m_s}}$; without loss of generality we may assume
that $\bar{u}_1 = \ldots = \bar{u}_s = 1$ in $k^\times$. As
$m[P]$ is a $G(P)$-invariant divisor and $f, f_1, \ldots, f_s$
form a basis of $H^0(X,\cO_X([mP]))$ (see \cite[p.~34]{StiBo}),
there exist maps $c_0, \ldots, c_s$ from $G(P)$ to $k$ such
that
\[g(f) = c_0(g)f + c_1(g)f_1 + \ldots + c_s(g)f_s\]
for all $g \in G(P)$; i.e.~we have
\begin{equation}\label{actionofgonf}
g\left(\frac{1}{t^m}\right) = g(f) = \frac{c_0(g)}{t^m} + \frac{c_1(g) u_1}{t^{m_1}} + \frac{c_2(g)u_2}{t^{m_2}}
+ \ldots + \frac{c_s(g) u_s}{t^{m_s}}.
\end{equation}
On the other hand, using the geometric series and the binomial
theorem we derive the following congruence for each $g \in
G(P)$:
\begin{eqnarray*}
\lefteqn{ g\left(\frac{1}{t^m}\right) = g(t)^{-m}}\\
& \equiv & t^{-m} \left(1+\alpha(g)t +\beta(g)t^2\right)^{-m}\\
& \equiv & t^{-m} \left(1-\alpha(g)t -\beta(g)t^2 +
\alpha^2(g)t^2\right)^m \\
& \equiv & t^{-m} \left(1+m \left(-\alpha(g)t - \beta(g)t^2 +
\alpha^2(g) t^2\right) + \binom{m}{2} \alpha^2(g)t^2 \right)\\
& \equiv & \frac{1}{t^m} + \frac{-m\alpha(g)}{t^{m-1}} +
\frac{-m\beta(g) + \binom{m+1}{2} \alpha^2(g)}{t^{m-2}} \quad
\mathrm{mod} \quad \left(\frac{1}{t^{m-3}}\right).
\end{eqnarray*}
Let $c \in k$ be defined by the congruence $u_1 \equiv 1 + ct$
mod~$(t^2)$. By comparing the coefficients in
(\ref{actionofgonf}) with the coefficients in this latter
congruence we obtain the following equalities for each $g \in
G(P)$: $c_0(g) =1$; $m_1 = m-1$ (because $G(P)$ is non-trivial
and $\alpha$ is injective) and $c_1(g) = -m\alpha(g)$; $c_1(g)
c = -m\beta(g) + \binom{m+1}{2} \alpha^2(g)$ (because $m_s <
\ldots < m_2 < m-2$). Now replacing $c_1(g)$ with $-m\alpha(g)$
in the latter equation and dividing by $m$ we obtain the
desired equations
\[\beta(g) = \frac{m+1}{2} \alpha^2(g) + c \alpha (g), \quad g \in G(P).\]
As $m-1 = m_1$, we have also proved that $m-1$ is a pole
number.

\end{proof}

\providecommand{\bysame}{\leavevmode\hbox to3em{\hrulefill}\thinspace}
\providecommand{\MR}{\relax\ifhmode\unskip\space\fi MR }
\providecommand{\MRhref}[2]{%
  \href{http://www.ams.org/mathscinet-getitem?mr=#1}{#2}
}
\providecommand{\href}[2]{#2}
{\footnotesize

}

\bigskip

School of Mathematics, University of Southampton, Highfield,
Southampton, SO17 1BJ, United Kingdom.

Department of Mathematics, National and Kapodistrian University
of Athens, Panepistimioupolis, GR-157 84, Athens Greece.

\end{document}